\newtheorem{thm}{Theorem}[section]
\newtheorem{cor}[thm]{Corollary}
\newtheorem{lemma}[thm]{Lemma}
\newtheorem{prop}[thm]{Proposition}
\theoremstyle{definition}
\newtheorem{defn}[thm]{Definition}
\theoremstyle{remark}
\newtheorem{rem}[thm]{Remark}
\newtheorem*{question*}{Question}
\newtheorem*{answer*}{Answer}
\def\eps{\epsilon}
\def\Snm1{\mathbb S^{n-1}}
\def\C{\mathbb C}
\def\R{\mathbb R}
\title{Algorithms and error bounds for noisy phase retrieval\\ with low-redundancy frames}
\date{}
\author{Bernhard G. Bodmann\thanks{Email: bgb@math.uh.edu, Tel.: 713 743 3581}\ \ and Nathaniel Hammen\\
\small 651 Philip G. Hoffman Hall, Department of Mathematics, University of Houston\\ \small Houston, TX 77204-3008}
\begin{document}
\maketitle

\begin{abstract}
The main objective of this paper is to find algorithms accompanied by explicit error bounds for phase retrieval
from noisy magnitudes of frame coefficients when the underlying frame has a low redundancy. 
We achieve these goals with
frames consisting of $N=6d-3$ vectors spanning a $d$-dimensional 
complex Hilbert space. The two algorithms we use, phase propagation or the kernel method, are polynomial time in the dimension $d$. 
To ensure a successful approximate recovery, we assume that the noise is sufficiently small compared to the squared norm of the vector
to be recovered. In this regime, the  error bound is inverse proportional to the signal-to-noise ratio. Upper and lower bounds on the sample
values of trigonometric polynomials are a central technique in our error estimates.
\end{abstract}

\section{Introduction}
When optical, electromagnetic or acoustic signals are measured,
often the measurement apparatus records an intensity, the magnitude of
the signal amplitude, while discarding phase information.
This is the case for X-ray crystallography \cite{Patterson,Millane_90}, many optical and acoustic systems \cite{Walther,Rabiner}, 
and also an intrinsic feature of quantum measurements \cite{Finkelstein_QuantumCom04,Gross_2014}. 
Phase retrieval is the procedure of determining missing phase information from suitably chosen intensity measurements, possibly 
with the use of additional signal characteristics \cite{Oppenheim_Phase_80,Fienup,Walther,Heinosaari_QuantumTom_13,MV13,Marchesini_AltProj14}. 
Many of these instances of phase retrieval are related to the Fourier transform \cite{Akutowicz1956,Akutowicz1957,Fienup_93}, but it is also of interest to study this
problem from an abstract point of view, using the magnitudes of any linear measurements
to recover the missing information. Next to infinite dimensional signal models \cite{PYB_JFAA14}, the finite dimensional case has received 
considerable attention in the past years \cite{Balan_RecWithoutPhase_06,Balan_Painless_09,Bandeira_4NConj,CandesEldar_PhaseRetrieval,Candes_PhaseLift, Demanet_PhaselessLinMeas13, CandesLi_FCM13,Waldspurger_PR14}. 
In this case, the signals are vectors in a finite dimensional Hilbert space $\mathcal H$ and
one chooses a frame $\{f_j\}_{j=1}^N$ to obtain for
each $x \in \mathcal H$ the magnitudes of the inner products with the frame vectors, $\{|\langle x, f_j \rangle |\}_{j=1}^N$.
When recovering signals, we allow for
a remaining undetermined global phase factor, meaning we identify vectors  in
the Hilbert space $\mathcal H$ that differ by a unimodular factor $\omega$, $|\omega|=1$,
in the real or complex complex case. 
Accordingly, we associate the equivalence class $[x]=\mathbb T x = \{\omega x: |\omega|=1 \}$
with a representative $x \in \mathcal H$ and consider the quotient space $ \mathcal H/\mathbb T$ as the domain
of the magnitude measurement map $\mathcal A:  \mathcal H/\mathbb T \to \mathbb R_+^N, \mathcal A([x])=(|\langle x, f_j\rangle|^2)_{j=1}^N$.
The map $\mathcal A$ is well defined, because $|\langle \omega x, f_j \rangle|^2$ does not depend
on the choice of $\omega \in \mathbb T$.  
The metric on $\mathcal H/\mathbb T$ relevant for the accuracy of signal recovery is the quotient metric $\rho$, which assigns to elements $[x]$ and $[y]$
with representatives $x, y \in \mathcal H$
the distance $\rho([x],[y])=\min_{|\omega|=1}\|x-\omega y\|$.

The case of signals in real Hilbert spaces is now fairly well understood
\cite{Balan_RecWithoutPhase_06,BCE07,Bandeira_4NConj}, while complex
signals still pose many open problems. 
When the number of measured magnitudes is allowed to grow at a sufficient rate,
then techniques from low-rank matrix completion are applicable to phase retrieval \cite{CandesEldar_PhaseRetrieval,Candes_PhaseLift, Demanet_PhaselessLinMeas13, CandesLi_FCM13,Waldspurger_PR14},
providing stable recovery from noisy measurements. Other methods also achieve stability  by a method that 
locally patches the phase information together \cite{Alexeev_PhaseRetrieval13,Yang_SampTA13,PYB_JFAA14}.
Recently, it was shown that for a vector in a complex $d$-dimensional 
Hilbert space, a generic choice of $4d-4$ linear measurements is sufficient to recover the vector up to a unimodular factor
from the magnitudes \cite{Conca_Algebraic13}, complementing an earlier result on a deterministic choice of $4d-4$ vectors \cite{BodmannHammen}.
Nevertheless, fully quantitative stability estimates were missing in this case of lowest redundancy  known   to be sufficient for recovery. 



A main objective of this paper is to find frames $\{f_j\}_{j=1}^N$ for the $d$-dimensional 
complex Hilbert space  such that $N$ is small and
the magnitude measurement map 
is injective on  $ \mathcal H/\mathbb T$ with explicit error bounds for the approximate recovery when the magnitude measurements
are affected by noise. More precisely, we find a left inverse of $\mathcal A$ which extends to
a neighborhood of the range of $\mathcal A$ and is Lipschitz continuous for all input signals whose signal-to-noise ratio is sufficiently large. 
We show that the recovery is implemented with an explicit algorithm
that restores the signal from measurements to a given accuracy in a number of operations that is polynomial in the dimension of the Hilbert space.
The algorithm 
can be chosen to be either phase propagation or what we call the kernel method, a special case of semidefinite programming. 
The smallest number of frame vectors
for which we could provide an algorithm with explicit error bounds is $N=6d-3$, as presented here.

To formulate the main result, it is convenient to take the Hilbert space as a space of polynomials $\mathcal P_d$ of maximal degree $d-1$ equipped with
the standard inner product, see Section~\ref{sec:poly}
for details. With this choice of Hilbert space, the magnitude measurements we use are expressed in terms of point evaluations. We 
let $\omega=e^\frac{2i\pi}{2d-1}$ denote the primitive $(2d-1)$-st root of unity and $\nu=e^\frac{2i\pi}d$ the primitive $d$-th root of unity.
For a polynomial $p \in \mathcal P_d$, the noiseless magnitude measurements are
$$
   (\mathcal A(p))_j = \left\{
\begin{array}{cl}
|p(\omega^j)|^2&\text{if }1\le j\le 2d-1 ,\\
|p(\omega^j)-p(\omega^j\nu)|^2&\text{if }2d\le j\le4d-2 ,\\
|p(\omega^j)-ip(\omega^j\nu)|^2&\text{if }4d-1\le j\le6d-3 \, .
\end{array}
\right.
$$

The noisy magnitude measurements $\tilde{\mathcal A}$ are obtained from perturbing the noiseless magnitudes with a vector $\epsilon \in \mathbb R^{6d-3}$,
$(\tilde{\mathcal A}(p, \epsilon))_j = (A(p))_j+\eps_j$, $j \in \{1, 2, \dots, 6d-3\}$.

Our main theorem states that for all measurement errors with a sufficiently small maximum noise component $\|\eps\|_\infty=\max_j |\epsilon_j|$, the noisy magnitude measurements determine an approximate
reconstruction of $p$ with an accuracy $O(\|\eps\|_\infty)$.
To state the theorem precisely involves several auxiliary quantities that all depend solely on the dimension $d$.
We let $r=\sin(\frac{2\pi}{(d-1)d^2})$ and choose a slack variable $\alpha \in (0,1)$
as well as $\beta=\frac{r^\frac{(d-1)d}2\left(\frac{d-1}{2d}\right)^d\frac2{d-1}}{\prod_{k=1}^{d-1}(r^k+1)}$.

\begin{thm}
Let $r, \alpha$ and $\beta$ be as above. For any nonzero analytic polynomial $p\in\mathcal P_d$ and $\eps\in\R^{6d-3}$ with $\|\eps\|_\infty\le\frac{\alpha(\beta\|p\|)^2}{2d-1}$, an approximation $\tilde p\in \mathcal P_d$ can be constructed from the perturbed magnitude measurements 
$\widetilde{\mathcal A}(p,\eps)$, such that 
if $\tilde C=\frac{(1+\sqrt2)(2d-1)\|\eps\|_\infty+d \|p\|^2}{(\beta\|p\|)^2(1-\alpha)}$ then the recovery error is bounded by
$$
\rho([p],[\tilde p])\le
\left(\frac{2+\sqrt2}{\beta^2(1-\alpha)}\frac{d-d\tilde C-1+\tilde C^d}{1-\tilde C}\sqrt d+\frac{1-\tilde C^d}{2\beta\sqrt{\frac1{\sqrt d}(1-\alpha)}}\right)\frac{d(2d-1)}{(1-\tilde C)}\frac{\|\eps\|_\infty}{\|p\|_2} \, .
$$
\end{thm}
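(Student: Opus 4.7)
My plan is to construct and analyze an explicit three-stage recovery algorithm, tracking how the noise propagates at each stage. The three measurement blocks correspond naturally to three computational objects. The first $2d-1$ measurements are noisy samples of $f(z):=|p(z)|^2$ on the unit circle; the remaining $4d-4$ measurements, combined with the Stage 1 output via the polarization identities
\begin{align*}
\mathrm{Re}(u\bar v)&=\tfrac12\bigl(|u|^2+|v|^2-|u-v|^2\bigr),\\
\mathrm{Im}(u\bar v)&=\tfrac12\bigl(|u|^2+|v|^2-|u-iv|^2\bigr),
\end{align*}
with $u=p(\omega^j)$ and $v=p(\omega^j\nu)$, give noisy samples of the cross product $g(z):=p(z)\overline{p(\nu z)}$. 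Both $f$ and $g$ are trigonometric polynomials with exactly $2d-1$ Fourier modes on the unit circle, so each is exactly reconstructible from its values at the $(2d-1)$-st roots of unity $\omega^j$ via the inverse DFT, with pointwise interpolation error controlled by a small multiple of $\|\eps\|_\infty$.

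\textbf{Phase propagation.} The ratio $\overline{g(z)/f(z)}=p(\nu z)/p(z)$ (valid where $p\neq 0$) drives a phase propagation along the $d$-cycle $z_0,\nu z_0,\dots,\nu^{d-1}z_0$ of any anchor $z_0$. I would take $z_0=\omega^{j_0}$ for an index $j_0$ at which $|p(z_0)|\ge\beta\|p\|$---such an index exists by the trigonometric-polynomial lower bounds developed in the earlier sections, which are the source of the parameters $r$ and $\beta$---and fix a representative of the global phase by choosing $p(z_0)$ real and positive. The iteration
\[
\tilde p(\nu^k z_0)=\tilde p(z_0)\prod_{j=0}^{k-1}\overline{\tilde g(\nu^j z_0)/\tilde f(\nu^j z_0)}
\]
then recovers approximations of $p$ at the $d$ distinct orbit points, from which Lagrange interpolation produces the polynomial $\tilde p\in\mathcal P_d$.

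\textbf{Main obstacle.} The central difficulty is controlling error amplification through the $d$-fold product in the phase propagation. The noise hypothesis $\|\eps\|_\infty\le\alpha(\beta\|p\|)^2/(2d-1)$ ensures that $\tilde f$ stays bounded below by $(\beta\|p\|)^2(1-\alpha)$ on the anchor orbit, so each ratio $\tilde g(\nu^j z_0)/\tilde f(\nu^j z_0)$ is well defined and its per-step multiplicative perturbation is bounded by the quantity $\tilde C$ from the statement. Expanding the $d$-fold product and telescoping the perturbations yields, after summation, closed-form geometric-series expressions equal to $(d-d\tilde C-1+\tilde C^d)/(1-\tilde C)$ and $(1-\tilde C^d)/(1-\tilde C)$, precisely the two combinations visible in the stated bound. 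Reassembling the orbit values into the polynomial $\tilde p$ by Lagrange interpolation contributes the factor $d(2d-1)/(1-\tilde C)$ through the operator norm of the Vandermonde inverse at the $\nu$-orbit, and the quotient metric $\rho([p],[\tilde p])$ is evaluated by optimizing over the global phase $|\omega|=1$, producing the remaining numerical constants $2+\sqrt 2$ and $\sqrt 2$ from elementary phase-alignment inequalities. The most delicate point is verifying that, under the smallness hypothesis, $\tilde C$ lies in the regime where the geometric-series estimate is effective and that the three sources of error---magnitude interpolation, polarization, and propagation---combine additively into the displayed bound without extraneous dimensional factors.
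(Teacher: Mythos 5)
Your overall architecture matches the paper's: interpolate the three measurement families to trigonometric polynomials on the whole circle, use polarization to get the cross terms $p(z)\overline{p(\nu z)}$, propagate phase along the orbit $\{\nu^j z_0\}_{j=1}^{d}$, and accumulate the per-step errors into the geometric series in $\tilde C$. But there is a genuine gap at the crux of the argument: the choice of $z_0$. You only arrange for the \emph{single} anchor value $|p(z_0)|$ to be at least $\beta\|p\|$, yet your propagation divides by $\tilde f(\nu^j z_0)$ at \emph{every} point of the orbit, so you need $|p(\nu^j z_0)|\ge\beta\|p\|$ simultaneously for all $d$ orbit points. This simultaneous lower bound is exactly what the paper's max-min machinery (Lemmas~\ref{lem:geom-trick}--\ref{lem:mag-bnd}) is built to deliver, and it is nontrivial: one must bound the distance from the whole orbit to the roots of \emph{all nonzero truncations} of $p$ (the truncations enter because noise can hide small leading coefficients, so the effective degree of $p$ is unknown), and then convert that root-separation into a magnitude bound by an induction over truncations. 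Your proposal omits this entirely. Moreover, you restrict $z_0$ to the measurement nodes $\omega^{j_0}$; the paper's construction places $z_0$ at the midpoint of a maximally separated pair in a finite set built from the roots, which is generally not a $(2d-1)$-st root of unity. Since the interpolated $f_0$ is known on the whole circle, there is no reason to restrict $z_0$, and doing so would defeat the separation argument.

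Two smaller inaccuracies: the reassembly of $\tilde p$ from the orbit values needs no Lagrange interpolation or Vandermonde inverse, because $\{\frac1{\sqrt d}K_{z_0\nu^j}\}_{j=1}^d$ is an orthonormal basis of $\mathcal P_d$ and the orbit values are (up to normalization) the coefficients of $p$ in that basis; the factor $d(2d-1)/(1-\tilde C)$ comes from the Dirichlet interpolation amplification $(2d-1)$, the norm equivalences $\frac1{\sqrt d}\|p\|_2\le\|x\|_\infty\le\sqrt d\|p\|_2$, and summing the per-component geometric bounds over $k=1,\dots,d$, not from a Vandermonde condition number. Likewise the constants $1+\sqrt2$ and $2+\sqrt2$ arise from bounding the noise combination $\frac12|(1-i)\eps_k+(1-i)\eps_{k+1}-\eps_{d+k}+i\eps_{2d+k-1}|$ in the polarization step and from the error recursion, not from the final phase-alignment in $\rho$.
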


The proof and the construction of approximate recovery proceeds in several steps: 
\begin{description}
\item[Step 1.] First, we augment the finite number of magnitude measurements to an infinite family of such measurements.
To this end, the Dirichlet Kernel is used to interpolate the perturbed measurements to functions on the entire unit circle. 
In the noiseless case, the magnitude measurements $\mathcal A(p)$ determine the values
$|p(z)|^2$, $|p(z)-p(z\nu)|^2$, and $|p(z)-ip(z\nu)|^2$ for each $z, |z| = 1$, because these are trigonometric polynomials
of degree at most $d-1$.
In the noisy case,  the interpolation using values from $\widetilde{\mathcal A}(p,\eps)$, yields trigonometric polynomials that differ 
from the unperturbed ones by at most $(2d-1)\|\eps\|_\infty$, uniformly on the unit circle. 
\item[Step 2.] We select a suitable set of non-zero magnitude measurements from the infinite family.
A lemma will show that there exists a $z_0$ on the unit circle such that the distance between any element of $\{\nu^jz_0\}_{j=1}^d$ and any roots of any non-zero truncation of 
the polynomial $p$ is at least $r$. The reason why we need to consider all non-zero truncations of the polynomial is that the influence
of the noise prevents us from determining the true degree of $p$. However, when the coefficients of leading powers are sufficiently
small compared to the noise, we can replace $p$ with a truncated polynomial without losing the order of approximation accuracy.
As a consequence, we show that for this $z_0$, $\min_{1 \le j \le d} |p(\nu^jz_0)|\ge m$ with some $m>0$ that only depends
on the dimension $d$ and the norm of $p$. 
Thus, if the noise is sufficiently small compared to the norm of the vector, then there is a similar lower bound on the real trigonometric polynomials that interpolate the noisy magnitude measurements. 
\item[Step 3.] 
In the last step, the reconstruction evaluates the trigonometric approximations at the 
sample points $\{\nu^jz_0\}_{j=1}^d$ and recovers an approximation to the equivalence class $[p]$.
It is essential for this step that the sample values are bounded away from zero
in order to achieve a unique reconstruction.
There are two algorithms considered for this, phase propagation, which
recovers the phase iteratively using the phase relation between sample points,
and the kernel method, which computes a vector in the kernel of
a matrix determined by the magnitude measurements. The error bound
is first derived for phase propagation and then related to that of the kernel method.
Both algorithms are known to be polynomial time, either from the explicit description,
or from results in numerical analysis \cite{GuEisenstat}.
\end{description}

The nature of the main result has also been observed in simulations; assuming an a priori bound on the magnitude of the noise 
results in a worst-case recovery error that grows at most inverse proportional to the
signal-to-noise ratio. Outside of this regime, the error is not controlled in a linear fashion.
To illustrate this, we include two plots of the typical behavior for the recovery error
for $d=7$. The range of the plots is chosen to show the behavior of the worst-case error in the
linear regime and also for errors where this linear behavior breaks down. 

We tested the algorithm on more than 4.5 million randomly generated polynomials with norm 1. 
When errors were graphed for a fixed polynomial, the linear bound for the worst-case error was confirmed, although the observed errors were many orders of magnitude less than the error bound given in this paper. A small number of polynomials we found exhibited a max-min value that is an order of magnitude smaller than that of all the other randomly generated polynomials. We chose the polynomial with the worst max-min value out of the 4.5 million that had been tried, and applied a random walk to its  coefficients, with steps of decreasing size that were accepted only if the max-min value decreased. The random walk terminated at a polynomial which provided an error bound that is an order of magnitude worse than any other polynomials that had been tested before. This numerically found, local worst-case polynomial is given by $p(z)=0.3114912-0.0519351i 
-(0.0367368+0.6727228i) z
-(0.2214904+0.1978638i) z^2
-(0.3210523+0.3897147i) z^3
-(0.1358901-0.1047726i) z^4
-(0.07403360+0.2281884i) z^5
-(0.12017811-0.04210790i) z^6$. The accuracy of the coefficients
displayed here is sufficient to reproduce the results initially obtained
with floating point coefficients of double precision. 
The errors resulting for this polynomial
in the linear and transition regimes are shown in Figures~\ref{fig1} and \ref{fig2}.
%
\begin{figure}[h]
\begin{center}
 \includegraphics[width=3.6in]{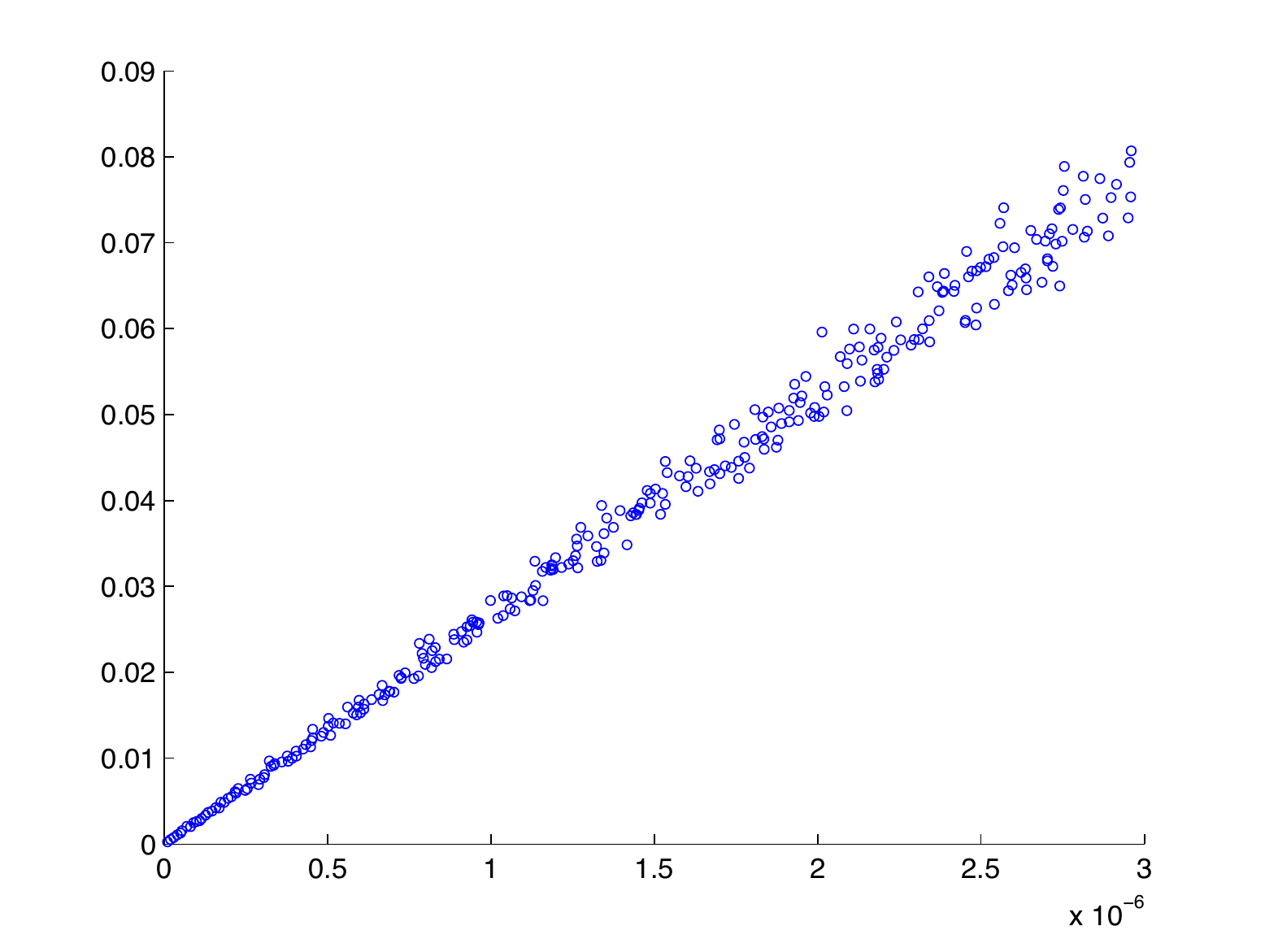} 
 \caption{Recovery error for the experimentally found worst-case normalized polynomial in dimension $d=7$ as a function of the maximal noise magnitude
 in the linear regime.}\label{fig1}
 \end{center}
 \end{figure}
 %
\begin{figure}[h]
\begin{center}
 \includegraphics[width=3.6in]{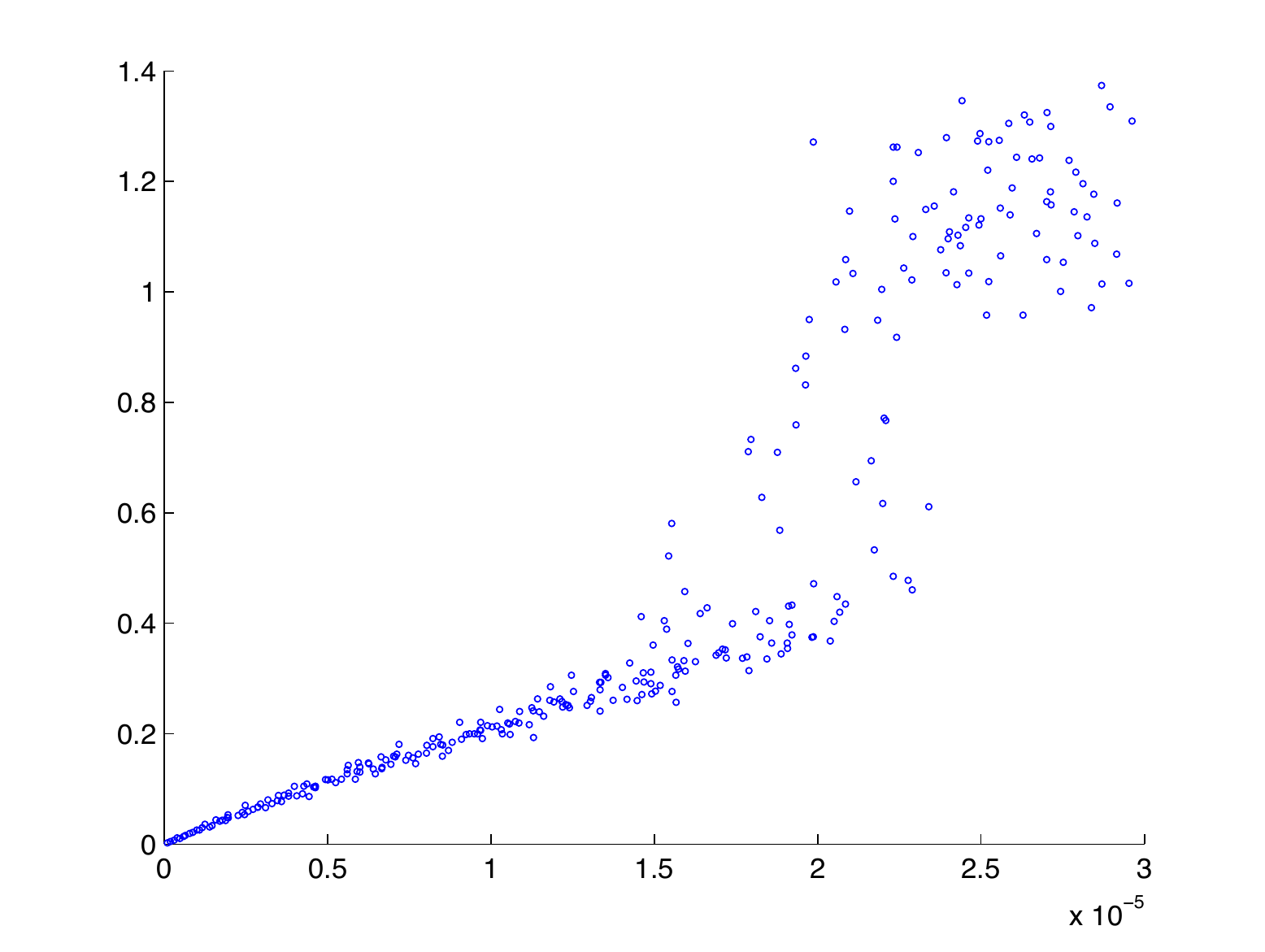}
 \caption{Recovery error for the experimentally found worst-case polynomial in dimension $d=7$ as a function of the maximal noise magnitude
 beyond the linear regime.}\label{fig2}
 \end{center}
 \end{figure}

\section{Noiseless recovery}

It is instructive to follow the construction of the magnitude measurements and the recovery strategy in the absence of noise.

\subsection{Recovery algorithms for  full vectors}\label{sec:full}

To motivate and prepare the recovery strategy, we compare two recovery methods, phase propagation and the kernel method, 
a simple form of semidefinite programming,
in the absence of noise and under additional 
non-orthogonality conditions
on the input vector. 

\begin{defn}
If $\{e_j\}_{j=1}^d$ is a basis for $\C^d$, and $x\in\C^d$ such that for all $j$ from $1$ to $d$, $\langle x,e_j\rangle\ne0$ then we call $x$ {\em full} with
respect to $\{e_j\}_{j=1}^d$. 
\end{defn}

We recall a well known result concerning recovery of full vectors  \cite{Finkelstein_QuantumCom04,Flammia_PureStates05}.
Let $x \in \mathcal H$ be full with respect to an orthonormal basis $\{e_j\}_{j=1}^d$.
For any $j$ from $1$ to $3d-2$, we define the measurement vector $f_j\in\C^d$ as
$$
f_j=\left\{
\begin{array}{cl}
e_j&\text{if }1\le j\le d\\
e_{j-d}-e_{j-d+1}&\text{if }d+1\le j\le2d-1\\
e_{j-(2d-1)}-ie_{j-(2d-1)+1}&\text{if }2d\le j\le3d-2
\end{array}
\right.
$$
The set $\{f_j\}_{j=1}^{3d-2}$ of measurement vectors is a frame for $\C^d$ because it contains a basis. Define the magnitude 
measurement map $\mathcal A_{\{e_j\}}:\C^d\to\R_+^{3d-2}$ by $\mathcal A_{\{e_j\}}(x)=(|\langle x,f_j\rangle|^2)_{j=1}^{3d-2}$. 

Recovery of full vectors with $3d-2$ measurements has been shown in \cite{Finkelstein_QuantumCom04}, and was proven to be minimal in \cite{Flammia_PureStates05}. We show recovery of full vectors
with $3d-2$ measurements using the measurement map $\mathcal A_{\{e_j\}}$ with two different recovery methods. The first is called phase propagation, the second
is a special case of semidefinite programming.

\subsubsection{Phase propagation}
The phase propagation method sequentially recovers the components of the vector, similar to the approach outlined in \cite{Balan_Painless_09},
see also \cite{Yang_SampTA13,Pohl_ICASSP14,PYB_JFAA14}.

\begin{prop}
For any vector $x\in\C^d$, if $\{e_j\}_{j=1}^d$ is an orthonormal basis with respect to which $x$ is full, then the vector $y=\frac{\langle e_1, x\rangle}{|\langle e_1, x\rangle|}x$ may be obtained by induction on the components of $y$, using the values of $\mathcal A_{\{e_j\}}(x)$.
\end{prop}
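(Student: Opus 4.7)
The plan is to extract from each consecutive pair of measurements the complex product $\langle x,e_k\rangle\overline{\langle x,e_{k+1}\rangle}$ via the polarization identity, and then propagate phases from the first component $y_1$ outward. Write $x_k = \langle x, e_k\rangle$, so fullness reads $x_k\ne 0$ for all $k$. The target vector $y=\omega x$ with $\omega=\langle e_1,x\rangle/|\langle e_1,x\rangle|$ has the property that $y_k\overline{y_{k+1}}=x_k\overline{x_{k+1}}$ for every $k$, since the unimodular factor $\omega$ cancels against its conjugate. Thus if we can recover the products $x_k\overline{x_{k+1}}$ and the modulus $|y_1|=|x_1|$, we can recover $y$ by a simple recursion.

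The first $d$ measurements give $|x_k|^2$ directly. For $1\le k\le d-1$, expanding
\[
 |x_k - x_{k+1}|^2 = |x_k|^2 + |x_{k+1}|^2 - 2\,\mathrm{Re}\bigl(x_k\overline{x_{k+1}}\bigr)
\]
and the analogous identity coming from the inner product with $e_k - i e_{k+1}$ yield $\mathrm{Re}(x_k\overline{x_{k+1}})$ and $\mathrm{Im}(x_k\overline{x_{k+1}})$, hence the complex number $x_k\overline{x_{k+1}}$ itself. These are precisely the combinations provided by the three blocks in the definition of $f_j$.

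Now set up the induction. Base case: $y_1 := \sqrt{|x_1|^2}>0$, which is well-defined and nonzero by fullness and matches the normalization $y_1=\omega x_1=|x_1|$. Inductive step: assuming $y_k$ has been constructed with $|y_k|=|x_k|\ne 0$ and $y_k = \omega x_k$, define
\[
 y_{k+1} := \frac{\overline{x_k\overline{x_{k+1}}}}{\overline{y_k}} = \frac{\overline{x_k}\,x_{k+1}}{\overline{y_k}}.
\]
Then $\overline{y_k}\,y_{k+1}=\overline{x_k}\,x_{k+1}$, equivalently $y_k\overline{y_{k+1}}=x_k\overline{x_{k+1}}$, and substituting $y_k=\omega x_k$ together with $|\omega|=1$ gives $y_{k+1}=\omega x_{k+1}$, closing the induction. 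In particular $|y_{k+1}|=|x_{k+1}|\ne 0$, so the recursion never divides by zero.

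There is no real obstacle: the only subtlety is a convention check on the inner product (conjugate-linearity in which slot), which flips a sign inside the imaginary part identity but does not alter the conclusion that the two polarization measurements together recover $x_k\overline{x_{k+1}}$. Once that is done, the induction produces $y=\omega x$ from $\mathcal A_{\{e_j\}}(x)$ in $O(d)$ arithmetic operations, as required.
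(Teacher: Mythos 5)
Your proof is correct and follows essentially the same route as the paper: recover the cross terms $\overline{x_k}x_{k+1}$ by the polarization-type identity from the three measurement blocks, then propagate the phase inductively from $y_1=|x_1|$. The only cosmetic difference is that you divide by $\overline{y_k}$ where the paper multiplies by $y_k/\mathcal A(x)_k=y_k/|x_k|^2$; under the inductive hypothesis $|y_k|=|x_k|$ these are the same operation.
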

\begin{proof}
Without loss of generality, we assume that $\{e_j\}_{j=1}^d$ is the standard basis,
drop the subscript from $\mathcal A_{\{e_j\}}$  and abbreviate the components of the vector $x$ by
$x_j = \langle x, e_j\rangle$ for each
$j \in \{1, 2, \dots, d\}$, and similarly for $y$.
To initialize, we let $y_1=\sqrt{|\langle x,e_1 \rangle|^2}=|\langle x,e_1 \rangle|=|x_1|$ so that $y_1=\frac{\overline{x_1}}{|x_1|}x_1$.

For the $k$th  inductive step with $k<d$, we assume that we have constructed $y_k$ with the given information. We then let
$$
y_{k+1}
=\frac{1}{2\mathcal A(x)_k} \left( (1-i)\mathcal A(x)_k+(1-i)\mathcal A(x)_{k+1}
-\mathcal A(x)_{k+d}+i \mathcal A(x)_{k+2d+1} \right)y_k \, .
$$
Inserting the values for the magnitude measurements and by a fact similar to the polarization identity,
\begin{align*}
y_{k+1}
=&
\frac{1}{2|x_k|^2} \left((1-i)|x_k |^2+(1-i)|x_{k+1}|^2-|x_k-x_{k+1}|^2+i| x_k-ix_{k+1} |^2 \right)y_k\\
=&\frac{\overline{x_k}x_{k+1}}{|x_k |^2}y_k
=\frac{y_kx_{k+1}}{x_k}
=\frac{\frac{\overline{x_1}}{|x_1|}x_kx_{k+1}}{x_k}
=\frac{\overline{x_1}}{|x_1|}x_{k+1} \, .
\end{align*}
Iterating this,  we obtain $y=\frac{\overline{x_1}}{|x_1|}x$.
\end{proof}

\subsubsection{Kernel method}
Recovery by the kernel method minimizes the values of a quadratic form subject to a norm constraint, 
or equivalently, computes an extremal eigenvector for an operator associated with the quadratic form.
The operator we use is $Q_x=T^*_xT_x$, with
$$
   T_x = \sum_{j=1}^{d-1} (  |\langle x, e_j\rangle|^2 e_j \otimes e_{j+1}^* 
   - \langle e_j, x\rangle \langle x, e_{j+1} \rangle  e_j \otimes e_{j}^* )
$$
where each $e_j^*$ denotes the linear functional which is associated with
the basis vector $e_j$. The operator $T_x$ 
is indeed determined by the magnitude measurements. In particular,
the second term in the series is computed
 via the polarization-like identity
as in the proof of the preceding theorem,
$$
2 \langle e_j,x  \rangle\langle x,e_{j+1} \rangle= (1-i)\mathcal A_{\{e_j\}}(x)_k+(1-i)\mathcal A_{\{e_j\}}(x)_{k+1}
-\mathcal A_{\{e_j\}}(x)_{k+d}+i \mathcal A_{\{e_j\}}(x)_{k+2d+1} 
$$
for any integer $j$ from $1$ to $d-1$.

By construction, the rank of $Q_x$ is at most equal to $d-1$, because the range of $T_x$ is in the
span of $\{e_j\}_{j=1}^{d-1}$. In the  next theorem, we show that indeed the kernel of $T_x$, or equivalently, the kernel of $Q_x$,
is one dimensional, consisting of all multiples of $x$.

\begin{prop}
For any vector $x\in\C^d$, if $\{e_j\}_{j=1}^d$ is a basis with respect to which $x$ is full, 
then the null space of the operator $T_x$ is given by all complex multiples of $x$.
\end{prop}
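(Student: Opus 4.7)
The plan is to reduce the equation $T_x v = 0$ to a first-order recursion on the inner products $\langle v, e_j\rangle$, which under the fullness hypothesis telescopes to force $v \in \C x$. First I would expand the action of $T_x$ on an arbitrary vector $v\in\C^d$: with $x_k := \langle x, e_k\rangle$ for brevity, each summand of the form $e_j\otimes e_{j+1}^*$ sends $v$ to $\langle v, e_{j+1}\rangle\, e_j$ and each $e_j\otimes e_j^*$ sends $v$ to $\langle v, e_j\rangle\, e_j$, so the $j$-th term in the definition of $T_x$ is the unique contributor to the $e_j$-coefficient of $T_x v$, giving
$$T_x v \;=\; \sum_{j=1}^{d-1} \bigl( |x_j|^2\, \langle v, e_{j+1}\rangle - \overline{x_j}\, x_{j+1}\, \langle v, e_j\rangle \bigr)\, e_j.$$

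Linear independence of $\{e_j\}_{j=1}^{d-1}$ then converts $T_x v = 0$ into the system of $d-1$ scalar equations $|x_j|^2\, \langle v, e_{j+1}\rangle = \overline{x_j}\, x_{j+1}\, \langle v, e_j\rangle$ for $j=1,\dots,d-1$. Fullness guarantees $x_j \neq 0$ for every $j$, so dividing by $|x_j|^2 = x_j\overline{x_j}$ rewrites the recursion as $\langle v, e_{j+1}\rangle/x_{j+1} = \langle v, e_j\rangle/x_j$. An immediate induction shows this common ratio is independent of $j$; setting $\lambda := \langle v, e_1\rangle / x_1$ yields $\langle v, e_k\rangle = \lambda x_k = \langle \lambda x, e_k\rangle$ for all $k\in\{1,\dots,d\}$. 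Since $\{e_k\}_{k=1}^d$ spans $\C^d$, the map $w \mapsto (\langle w, e_k\rangle)_{k=1}^d$ is injective, so $v = \lambda x$. Conversely, substituting $v = \lambda x$ makes both sides of the recursion equal to $\lambda|x_j|^2 x_{j+1}$, so $x \in \ker T_x$, and the two directions together give $\ker T_x = \C x$.

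I do not expect a substantive obstacle. The only step requiring any care is the bookkeeping when reading off the $e_j$-component of $T_x v$: one must observe that the image of $T_x$ automatically lies in the span of $\{e_j\}_{j=1}^{d-1}$, so the $e_d$-coefficient of $T_x v$ vanishes identically and there are exactly $d-1$ equations for $d$ unknowns, which is precisely what a one-dimensional null space requires. Beyond that, the recursion is of telescoping type and fullness is exactly the hypothesis that validates each division by $|x_j|^2$.
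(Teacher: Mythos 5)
Your argument is correct and is essentially the paper's own proof: the paper writes $T_x = M_{|x|^2}S - M_{\overline{x}Sx}$ using a shift and two multiplication operators, reads off the same componentwise recursion $y_{j+1} = \frac{\overline{x_j}x_{j+1}}{|x_j|^2}\,y_j$ from $T_x y = 0$, and telescopes the ratios $y_j/x_j$ exactly as you do, with the same direct verification that multiples of $x$ lie in the kernel. The only cosmetic difference is that you expand the rank-one summands componentwise rather than packaging them as operators.
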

\begin{proof}
As in the preceding proof, we let $\{e_j\}_{j=1}^d$ denote the standard basis.
Thus, using the measurements provided, we may obtain the quantity $\overline{x_j}x_{j+1}=\overline{\langle x,e_j \rangle}\langle x,e_{j+1} \rangle$. With respect to the basis $\{e_j\}_{j=1}^{d}$, let 
$S:\C^d\to\C^{d}$ be the left shift operator $S\left((y_j)_{j=1}^d\right)=(y_{j+1})_{j=1}^{d}$ where we extend the vector $y$ with the convention $y_{d+1}=0$.
We also define the multiplication operator $M_{\overline{x}Sx}:\C^{d}\to\C^{d}$ by the map $M_{\overline{x}Sx}\left((y_j)_{j=1}^{d}\right)=(\overline{x_j}x_{j+1}y_j)_{j=1}^{d}$, where again by convention we let $x_{d+1} = 0$. Similarly, we define the multiplication operator $M_{|x|^2}:\C^{d}\to\C^{d}$ by the map $M_{|x|^2}\left((y_j)_{j=1}^{d}\right)=(|x_j|^2y_j)_{j=1}^{d}$. Note that $M_{|x|^2}$ is invertible if and only if $|\langle x,e_j \rangle|\ne0$ for all $j$ from $1$ to $d$, which is true by assumption.  In terms of these operators, the operator $T_x:\C^d\to\C^{d}$ is expressed as $T_x=M_{|x|^2}S-M_{\overline{x}Sx}$. Then for any $c\in\C$
and $x \in \mathbb C^d$,
\begin{align*}
T_x(cx)=&M_{|x|^2}S(cx)-M_{\overline{x}Sx}(cx)\\
=&M_{|x|^2}\left((cx_{j+1})_{j=1}^{d}\right)-M_{\overline{x}Sx}\left((cx_j)_{j=1}^{d}\right)\\
=&(|x_j|^2cx_{j+1})_{j=1}^{d}-(\overline{x_j}x_{j+1}cx_j)_{j=1}^{d}\\
=&0
\end{align*}
so any complex multiple of $x$ is in the null space of this operator.

Conversely, assume that $y$ is in the null space of $T_x$. We use an inductive argument to show that for any $j$ from $1$ to $d$, $\frac{y_j}{x_j}=\frac{y_1}{x_1}$. The base case $j=1$ is trivial. For the inductive step, note that for any $j$ from $1$ to $d-1$,
$$
y_{j+1}-\frac{\overline{x_j}x_{j+1}}{|x_j|^2}y_j=M_{|x|^2}^{-1}\left(M_{|x|^2}(Sy-M_{\overline{x}Sx}) y\right)_j=(M_{|x|^2}^{-1}T_xy)_j=0 \, .
$$
Thus, $y_{j+1}=\frac{\overline{x_j}x_{j+1}}{|x_j|^2}y_j$, and because $x_j\ne0$ and $x_{j+1}\ne0$, we obtain
$$
\frac{y_{j+1}}{x_{j+1}}=\frac1{x_{j+1}}\frac{\overline{x_j}x_{j+1}}{|x_j|^2}y_j=\frac{y_j}{x_j} \, .
$$
We conclude that for any $j$ from $1$ to $d$, $y_j=\frac{y_1}{x_1}x_j$, so the vector $y$ is a complex multiple of $x$.
\end{proof}

Since the frame vectors used for the magnitude measurements contain an orthonormal basis, $\mathcal A_{\{e_j\}}$
determines the norm of $x$. This is sufficient to recover $[x]$.

\begin{cor}
If the vector $x \in \mathcal H$ is full with respect to the orthonormal basis $\{e_j\}_{j=1}^d$, then
the equivalence class $[x]$ is the solution of the problem
$$
     \arg\min \{  \|T_x y \|^2 : y \in \mathcal H, \|y\|^2=\sum_{i=1}^d \mathcal A(x)_i \} \, .  
$$
\end{cor}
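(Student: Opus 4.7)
My plan is to reduce the minimization problem to the two facts already established in this subsection: that the null space of $T_x$ is exactly $\mathbb C x$ (the preceding proposition), and that the first $d$ components of $\mathcal A_{\{e_j\}}(x)$ already encode $\|x\|^2$. The latter is just Parseval's identity applied to the orthonormal basis $\{e_j\}_{j=1}^d$: since $f_j=e_j$ for $1\le j\le d$, one has $\sum_{i=1}^d \mathcal A(x)_i=\sum_{j=1}^d |\langle x,e_j\rangle|^2 = \|x\|^2$, so the feasible set in the optimization problem is exactly $\{y\in\mathcal H : \|y\|=\|x\|\}$.

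Next I would argue that the infimum of $\|T_x y\|^2$ over this sphere is attained and equals zero, and then characterize all minimizers. The value zero is attained, because the proposition gives $T_x x=0$ and $x$ itself belongs to the feasible set. Since $\|T_x y\|^2\ge 0$, any minimizer $y$ must satisfy $T_x y=0$, i.e.\ lie in the kernel of $T_x$. By the preceding proposition this kernel coincides with $\mathbb C x$, so every minimizer has the form $y=c x$ for some $c\in\mathbb C$. Combining with the norm constraint $\|cx\|^2=\|x\|^2$ forces $|c|=1$, and hence the set of minimizers is precisely $\mathbb T x=[x]$, which is what the corollary claims.

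There is essentially no obstacle here; the statement is an immediate consequence of the two cited facts, so the proof will be very short. The only minor subtlety worth flagging in the write-up is interpretive: the "$\arg\min$" in the statement denotes the whole set of minimizers rather than a single vector, and this set is a full orbit of the global phase action, matching the way the equivalence class $[x]$ is defined. No inequality estimates or compactness arguments beyond the sphere's boundedness are needed, and the corollary is the final piece that legitimizes the kernel method in the noiseless full-vector setting before the paper passes to the noisy, low-redundancy case.
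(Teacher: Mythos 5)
Your proposal is correct and follows exactly the route the paper intends: the paper leaves the corollary's proof implicit, remarking only that the measurements determine $\|x\|$ (your Parseval observation) and relying on the preceding proposition that $\ker T_x=\mathbb{C}x$, so the minimizers on the sphere of radius $\|x\|$ are precisely $\mathbb{T}x=[x]$. Nothing is missing.
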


Because the solution to the phase retrieval problem is obtained from the kernel of the linear operator $T_x$, or equivalently of $Q_x$,  
we may use methods from numerical linear algebra such as a rank-revealing QR factorization \cite{GuEisenstat} to recover the equivalence class $[x]$. 

\subsection{Augmentation and subselection of magnitude measurements} \label{sec:poly}

One of the main tools for the recovery procedure is that an entire family of magnitude measurements is determined
from the initial choice. We call this an augmentation of the measured values. From this family a suitable subset is chosen
which corresponds to a measurement of the form $\mathcal A_{\{e_j\}}$ related to an orthonormal basis as explained in the previous section.  

To describe the augmentation procedure we represent the $d$-dimensional vector to be recovered as an element of $\mathcal P_d$, the space of complex analytic polynomials of degree at most $d-1$ on the unit circle. This space is used to represent the vector because $\mathcal P_d$ is a reproducing kernel Hilbert space, and the magnitude squared of any element of $\mathcal P_d$ is an element of $\mathcal T_d$, the space of trigonometric polynomials of degree at most $d-1$ on the unit circle, which is itself a reproducing kernel Hilbert space. The space $\mathcal P_d$ is equipped with the scaled $L^2$ inner product on the unit circle such that for any $p,q\in\mathcal P_d$,
$$
\langle p,q\rangle
=\frac1{2\pi}\int_{[0,2\pi]}p(e^{it})\overline{q(e^{it})}dt \, .
$$
For any $p:z\mapsto\sum_{j=0}^{d-1}c_jz^j$ in $\mathcal P_d$, let $c$ be the vector of coefficients $(c_j)_{j=0}^{d-1}$ of $p$. Then by orthogonality, the norm induced by the inner product satisfies
$$
\|p\|^2
=\frac1{2\pi}\int_{[0,2\pi]}p(e^{it})\overline{p(e^{it})}dt
=\frac1{2\pi}\int_{[0,2\pi]}\sum_{j=0}^{d-1}|c_j|^2dt
=\|c\|_2^2 \, .
$$
If $K_w\in\mathcal P_d$ is defined such that $K_w(z)=\sum_{j=0}^{d-1}\overline{w^j}z^j$, then for any $p\in\mathcal P_d$ and any $z_0$ on the unit circle,
$$
p(z_0)=\sum_{j=0}^{d-1}c_jz_0^j=\langle p,K_{z_0} \rangle \, .
$$
Thus, these polynomials $K_w$ correspond to point evaluations, and linear combinations of these polynomials may be used as measurement vectors for the recovery procedure. If $\omega=e^\frac{2i\pi}{2d-1}$ is the $2d-1$-st root of unity and $\nu=e^\frac{2i\pi}d$ is the $d$-th root of unity, then for any $j$ from $1$ to $6d-3$, we define the measurement vector $\eta_j\in\mathcal P_d$ as
$$
\eta_j=\left\{
\begin{array}{cl}
K_{\omega^j}&\text{if }1\le j\le2d-1\\
K_{\omega^j}-K_{\omega^j\nu}&\text{if }2d\le j\le4d-2\\
K_{\omega^j}-iK_{\omega^j\nu}&\text{if }4d-1\le j\le6d-3
\end{array}
\right.
$$
Then the magnitude measurement map $\mathcal A:\mathcal P_d\to\R_+^{6d-3}$ defined in the Introduction satisfies $\mathcal A(p)=(|\langle p,\eta_j\rangle|^2)_{j=1}^{6d-3}$. 

The measurements are grouped into three subsets, corresponding to magnitudes of point evaluations,
magnitudes of differences, and magnitudes of differences between complex multiples of point values.
Each of these subsets can be interpolated to a family of measurements from which suitable representatives are
chosen. In order to simplify the recovery, we recall that for any $z$ with $|z|=1$ and $j \in \{1,2 , \dots , d-1\}$, $K_z$ and $K_{z \nu^j}$ are orthogonal 
because the series given by the inner product 
$\langle K_z, K_{z \nu^j}\rangle$ simply sums all the $d$-th roots of unity.


\begin{thm}
For any polynomial $p\in\mathcal P_d$, the measurements  $\mathcal A(p)$ determine 
the values of $\mathcal A_{\{\frac1{\sqrt d} K_{z_0\nu^j}\}}$ with
$z_0 \in \mathbb T$ such that
$ \{\frac1{\sqrt d} K_{z_0\nu^j}\}_{j=0}^{d-1} $ is an orthonormal basis with respect to which
$p$ is full.
\end{thm}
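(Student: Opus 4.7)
The plan is to use trigonometric (Dirichlet--kernel) interpolation to reconstruct three auxiliary functions on all of $\mathbb T$ from the $6d-3$ sample values in $\mathcal A(p)$, to choose a base point $z_0\in\mathbb T$ on which $p$ never vanishes along the orbit $\{z_0\nu^j\}_{j=0}^{d-1}$, and then to read off the $3d-2$ required measurements as values of the interpolated functions on that orbit.

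First I would observe that the three functions $z\mapsto|p(z)|^2$, $z\mapsto|p(z)-p(z\nu)|^2$ and $z\mapsto|p(z)-ip(z\nu)|^2$ are trigonometric polynomials on $\mathbb T$ of degree at most $d-1$, since $p(z)$ and $p(z\nu)$ are analytic polynomials of degree at most $d-1$. The three blocks of $\mathcal A(p)$ supply $2d-1$ equispaced samples of each of these functions at the nodes $z=\omega^j$, and $2d-1$ such nodes form a unisolvent set for trigonometric polynomials of degree at most $d-1$. Hence the three functions are determined on all of $\mathbb T$ by $\mathcal A(p)$.

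I would then verify orthonormality: for any $z_0\in\mathbb T$ and $j\not\equiv k\pmod d$, the identity already noted before the theorem gives $\langle K_{z_0\nu^j},K_{z_0\nu^k}\rangle=\sum_{l=0}^{d-1}\nu^{(k-j)l}=0$, while $\|K_{z_0\nu^j}\|^2=d$, so $\{\tfrac{1}{\sqrt d}K_{z_0\nu^j}\}_{j=0}^{d-1}$ is an orthonormal set of cardinality $d=\dim\mathcal P_d$, hence a basis. To pick $z_0$, I would use that $p$ has at most $d-1$ roots on $\mathbb T$, so only at most $d(d-1)$ values of $z_0$ satisfy $p(z_0\nu^j)=0$ for some $j$; any other $z_0\in\mathbb T$ works and can in fact be identified explicitly from the already-reconstructed function $|p(\cdot)|^2$. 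Fullness follows because $\langle p,\tfrac{1}{\sqrt d}K_{z_0\nu^j}\rangle=\tfrac{1}{\sqrt d}p(z_0\nu^j)\ne 0$ for every $j$.

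Finally, I would match entries: $\mathcal A_{\{\frac{1}{\sqrt d}K_{z_0\nu^j}\}}(p)$ consists of the $3d-2$ numbers $\tfrac{1}{d}|p(z_0\nu^j)|^2$ for $0\le j\le d-1$, $\tfrac{1}{d}|p(z_0\nu^j)-p(z_0\nu^{j+1})|^2$ for $0\le j\le d-2$, and $\tfrac{1}{d}|p(z_0\nu^j)-ip(z_0\nu^{j+1})|^2$ for $0\le j\le d-2$, and each of these is $\tfrac{1}{d}$ times a value of one of the three interpolated trigonometric polynomials at a point of the orbit $\{z_0\nu^j\}$, and is therefore determined by $\mathcal A(p)$. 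In the noiseless setting considered here there is no real obstacle; the genuinely delicate issue, postponed to Step~2 of the main outline, will be to obtain a \emph{quantitative} lower bound $\min_j|p(z_0\nu^j)|\ge m$ that survives small perturbations of both the measured magnitudes and the coefficients of $p$.
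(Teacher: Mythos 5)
Your proposal is correct and follows essentially the same route as the paper: interpolation of the three degree-$(d-1)$ trigonometric polynomials from the $2d-1$ equispaced samples (the paper writes this explicitly with the Dirichlet kernel, you invoke unisolvence, which is the same fact), choice of $z_0$ avoiding the finitely many rotates of the zeros of $p$, and identification of the resulting point values on the orbit $\{z_0\nu^j\}$ with the entries of $\mathcal A_{\{\frac1{\sqrt d}K_{z_0\nu^j}\}}(p)$. Your normalization $\tfrac1d|p(z_0\nu^j)|^2$ for those entries is in fact the consistent one given the factor $\frac1{\sqrt d}$ in the basis vectors, whereas the paper states the correspondence with a factor $\sqrt d$ at that point.
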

\begin{proof}
Let $D_{d-1}\in\mathcal T_d$ be the normalized Dirichlet kernel of degree $d-1$, so that for any $z$ in the unit circle $D_{d-1}(z)=\frac1{2d-1}\sum_{k=-(d-1)}^{d-1}z^k$. Then the set of functions $\{z\mapsto D_{d-1}(z\omega^{-l})\}_{l=1}^{2d-1}$ is orthonormal with respect to the $L^2$ inner product on the unit circle, and any $g\in\mathcal T_d$ can be interpolated as $g(z)=\sum_{l=1}^{2d-1}g(\omega^l)D_{d-1}(z\omega^{-l})$. Note that if $h\in\mathcal P_d$, then $\overline hh\in\mathcal T_d$. Thus, each of the functions $|p(z)|^2$, $|p(z)-p(z\nu)|^2$, and $|p(z)-ip(z\nu)|^2$, are in $\mathcal T_d$, and using the Dirichlet Kernel these functions may be interpolated from the values of $\mathcal A(p)$. So the values of each of these functions are known at all points on the unit circle, not just the points that were measured explicitly.

Let $\{z_1,\dots,z_m\}$ be the zeros of the polynomial $p$. Then the set $\{z_j\nu^k\}_{j=1,k=1}^{j=m,k=d}$ has finitely many elements, so we may choose a point $z_0$ on the unit circle such that $z_0\not\in\{z_j\nu^k\}_{j=1,k=1}^{j=m,k=d}$. Then for all $k$ from $1$ to $d$, $p(z_0\nu^k)\ne0$. Additionally, the set 
$\{z\mapsto\frac1{\sqrt d}K_{z_0\nu^j}(z)\}_{j=1}^d$ is an orthonormal basis for $\mathcal P_d$, so $p$ is a full vector with respect to this basis and the values of $\sqrt d|p(z)|^2$, $\sqrt d|p(z)-p(z\nu)|^2$, and $\sqrt d|p(z)-ip(z\nu)|^2$ at the points $\{z_0\nu^k\}_{k=1}^d$ correspond to the measurements $\mathcal A_{\{\frac1{\sqrt d}K_{z_0\nu^j}\}}(p)$. 
\end{proof}

Because the recovery procedure for full vectors has been established in Section~\ref{sec:full}, the phase-retrieval problem may now be solved using either procedure outlined there to obtain the equivalence class $[p]$.

\begin{cor}
For any polynomial $p\in\mathcal P_d$, the measurements  $\mathcal A(p)$ determine
$[p]$.
\end{cor}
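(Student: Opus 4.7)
The plan is to combine the preceding theorem with the recovery results already established in Section~\ref{sec:full}. Concretely, given $\mathcal A(p)$, the preceding theorem produces a point $z_0$ on the unit circle so that the rescaled point-evaluation kernels $\{\frac{1}{\sqrt d}K_{z_0\nu^j}\}_{j=0}^{d-1}$ form an orthonormal basis for $\mathcal P_d$ with respect to which $p$ is full, and so that the associated measurement values $\mathcal A_{\{\frac{1}{\sqrt d}K_{z_0\nu^j}\}}(p)$ are determined by $\mathcal A(p)$.

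Next, I would invoke either of the two reconstruction schemes from Section~\ref{sec:full}. Using phase propagation, I would apply the proposition to the basis $\{\frac{1}{\sqrt d}K_{z_0\nu^j}\}_{j=0}^{d-1}$ and the polynomial $p$: this yields a representative $q = \frac{\overline{\langle p,K_{z_0}\rangle}}{|\langle p,K_{z_0}\rangle|}\,p \in \mathcal P_d$, which is a unimodular multiple of $p$ and hence lies in $[p]$. Alternatively, the kernel method produces the operator $T_p$ from those same magnitude data via the polarization-like identity, and the corollary identifies $[p]$ as the minimizer of $\|T_p y\|^2$ subject to the norm constraint $\|y\|^2 = \sum_{i=1}^d \mathcal A_{\{\frac{1}{\sqrt d}K_{z_0\nu^j}\}}(p)_i$. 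Either way, $[p]$ is recovered from $\mathcal A(p)$.

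Because all of the substantive work — the existence of a good sample point $z_0$, the orthonormality of the shifted reproducing kernels, and the recovery of full vectors from $3d-2$ magnitude measurements — has already been done in the preceding theorem and in Section~\ref{sec:full}, no new obstacle arises. The only thing to verify is that the norm constraint required by the kernel method is itself determined by $\mathcal A(p)$; this follows because $\{\frac{1}{\sqrt d}K_{z_0\nu^j}\}_{j=0}^{d-1}$ is an orthonormal basis, so $\|p\|^2 = \sum_{j=0}^{d-1} |\langle p,\frac{1}{\sqrt d}K_{z_0\nu^j}\rangle|^2$ equals the sum of the first $d$ coordinates of $\mathcal A_{\{\frac{1}{\sqrt d}K_{z_0\nu^j}\}}(p)$, which is available from $\mathcal A(p)$ by the preceding theorem. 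The corollary then follows immediately.
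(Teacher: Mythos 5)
Your proposal is correct and follows essentially the same route as the paper, which likewise derives the corollary by combining the preceding theorem with either recovery procedure (phase propagation or the kernel method) from Section~\ref{sec:full}. Your extra check that the norm constraint is determined by $\mathcal A(p)$ is a welcome detail the paper leaves implicit.
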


Moreover, the number of points that need to be tested in order to find $z_0$ such that $p(z_0\nu^k)\ne 0$ for all $k \in \{1,2, \dots, d\}$
is at most $(d-1)d+1$, quadratic in $d$. This means, in the noiseless case either of the two equivalent methods for recovery requires a number of steps that
is polynomial in the dimension~$d$. 

\section{Recovery from noisy measurements}

For the purpose of recovery from noisy measurements, we consider the perturbed magnitude measurement map $\widetilde{\mathcal A}:\mathcal P_d\times\R^{6d-3}\to\R^{6d-3}$ by $\widetilde{\mathcal A}(p,\eps)=(|\langle p,\eta_j\rangle|^2+\eps_j)_{j=1}^{6d-3}$.

\subsection{A max-min principle for magnitude samples}
\label{sec:max-min}
In the presence of noise, choosing a basis such that $x$ is full with respect to that basis is not sufficient to establish a bound on the error of the recovered polynomial. A lower bound on the magnitude of the entries of $x$ in a such a basis is needed. Once such a lower bound is obtained, recovery may proceed as in the noiseless case, by  reduction of the phase retrieval problem to full vector recovery, with quantitative bounds on stability added for each step.
Obtaining the needed lower bound on the magnitude  requires a few lemmas and definitions.
\begin{lemma}
\label{lem:geom-trick}
If $a\in\C^d$ is represented as $a=(a_k)_{k=0}^{d-1}$, then for any $t\in(0,1)$, there exists at least one $n$ between $0$ and $d-1$ such that $|a_n|\ge\|a\|_1t^{d-n}(t^{-1}-1)$.
\end{lemma}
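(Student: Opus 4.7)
The plan is to argue by contradiction using a telescoping sum, in the spirit of an averaging or pigeonhole argument. The trivial case $\|a\|_1=0$ is handled separately: every $|a_n|=0$ and the claimed inequality reads $0\ge 0$, which holds. So I may assume $\|a\|_1>0$.

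Suppose, toward a contradiction, that for every $n\in\{0,1,\dots,d-1\}$ the inequality fails strictly, i.e.
$$
|a_n|<\|a\|_1\, t^{d-n}(t^{-1}-1)=\|a\|_1\bigl(t^{d-n-1}-t^{d-n}\bigr).
$$
I would then sum these $d$ inequalities, noting that on the right-hand side the index shift $m=d-n-1$ turns $\sum_{n=0}^{d-1}(t^{d-n-1}-t^{d-n})$ into the telescoping sum $\sum_{m=0}^{d-1}(t^m-t^{m+1})=1-t^d$. Since the left-hand sum is $\|a\|_1$, this yields
$$
\|a\|_1<\|a\|_1(1-t^d),
$$
and because $t\in(0,1)$ implies $1-t^d<1$, I obtain $\|a\|_1<\|a\|_1$, a contradiction. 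Hence there must exist at least one index $n$ for which the reverse inequality $|a_n|\ge\|a\|_1 t^{d-n}(t^{-1}-1)$ holds, as required.

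There is no real obstacle: the only thing to check carefully is the reindexing that produces the telescoping cancellation, and the strict/non-strict bookkeeping needed so that the contradiction actually materializes. The argument is robust in the sense that the choice of exponents $t^{d-n}(t^{-1}-1)$ is exactly what makes the sum telescope to $1-t^d<1$, so the geometric weights are tight for this strategy.
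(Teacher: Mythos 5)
Your proof is correct and follows essentially the same route as the paper: assume the bound fails for every index, sum the strict inequalities, and observe that the geometric weights sum to $1-t^d<1$, forcing $\|a\|_1<\|a\|_1$. The only cosmetic difference is that you evaluate the sum by telescoping while the paper uses the geometric series formula; your separate treatment of $\|a\|_1=0$ is harmless but not needed, since the assumed strict inequalities already yield $0<0$ in that case.
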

\begin{proof}
By way of contradiction, let $|a_j|<\|a\|_1t^{d-j}(t^{-1}-1)$ for all $j$ from $0$ to $d-1$. Then
$$
\|a\|_1
=\sum_{j=0}^{d-1}|a_j|
<\sum_{j=0}^{d-1}\|a\|_1t^{d-j}(t^{-1}-1)
=\|a\|_1(t^{-1}-1)\sum_{l=1}^dt^l
\le\|a\|_1
$$
This is a contradiction, so the claim holds.
\end{proof}

\begin{defn}
If $p\in\mathcal P_d$ such that $p(z)=\sum_{j=0}^{d-1}c_jz^j$ then we define 
for $n \in \{1,2, \dots, d\}$ 
the $n$-th truncation of $p$ to be the polynomial $p_n\in\mathcal P_n$ such that $p_n(z)=\sum_{j=0}^{n-1}c_jz^j$. 
\end{defn}

According to this definition, the $d$-th truncation of a polynomial is the polynomial itself. We also note that even if a polynomial is nonzero, it may 
have truncations that are zero polynomials.

To obtain a lower bound on the entries of a full vector originating from a polynomial, a lower bound on the distance between any basis element and any roots of any nonzero truncations of the base polynomial is needed.

\begin{lemma}
\label{lem:dist-bnd}
For any polynomial $p\in\mathcal P_d$, there exists a $z_0$ on the unit circle such that the linear distance between any element of $\{\nu^jz_0\}_{j=1}^d$ and any roots of any nonzero truncations of $p$ is at least $\sin(\frac{2\pi}{(d-1)d^2})$.
\end{lemma}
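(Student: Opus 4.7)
The plan is a measure argument on the unit circle $\mathbb T$. For each root $\zeta$ of each nonzero truncation $p_n$ of $p$ and each $j\in\{1,\dots,d\}$, declare the forbidden open arc
$$
B_{\zeta,j}=\{z_0\in\mathbb T:|\nu^j z_0-\zeta|<r\},\qquad r=\sin\!\left(\tfrac{2\pi}{(d-1)d^2}\right).
$$
Any $z_0\in\mathbb T\setminus\bigcup_{\zeta,j}B_{\zeta,j}$ satisfies the conclusion, so it suffices to show this complement is nonempty, which I would do by bounding the total arc length of the union by $2\pi$.

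First I would count: $p_n\in\mathcal P_n$ has at most $n-1$ roots when nonzero, so summing $n=1,\dots,d$ gives at most $\tfrac{d(d-1)}{2}$ roots across all nonzero truncations, and each root produces $d$ arcs (one per rotation $\nu^j$), for a grand total of at most $\tfrac{d^2(d-1)}{2}$ forbidden arcs. Next I would bound the length of a single $B_{\zeta,j}$. Since rotation by $\nu^{-j}$ is an isometry of $\mathbb T$, the length of $B_{\zeta,j}$ equals the length of $\{w\in\mathbb T:|w-\zeta|<r\}$; writing $\zeta=\rho e^{i\phi}$ and $w=e^{i\theta}$, the defining inequality rearranges to $\cos(\theta-\phi)>(1+\rho^2-r^2)/(2\rho)$. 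Minimizing the right-hand side over $\rho>0$ locates the worst case at $\rho=\sqrt{1-r^2}$, where the maximum arc length equals $2\arcsin(r)$. With $r=\sin(x)$ for $x=\tfrac{2\pi}{(d-1)d^2}$, this gives $|B_{\zeta,j}|\le 2x$.

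Multiplying the two bounds yields $\sum_{\zeta,j}|B_{\zeta,j}|\le\tfrac{d^2(d-1)}{2}\cdot 2x=2\pi$, exactly matching the circumference of $\mathbb T$. The main obstacle is that this estimate is tight, so a naive measure argument is borderline; this is also what makes the choice $r=\sin(\tfrac{2\pi}{(d-1)d^2})$ appear so rigid. I would resolve the tightness by a case split: if any of the three bounds (root count, arc length, or disjointness) is strict for the given $p$, then the union has measure strictly less than $2\pi$ and the complement has positive measure. In the extremal case where all bounds are simultaneously saturated, the arcs must be pairwise disjoint open arcs of maximal length tiling $\mathbb T$ up to finitely many shared endpoints; at any such endpoint $z_0$ the adjoining arcs supply equality $|\nu^j z_0-\zeta|=r$ while all other arcs give strict inequality $>r$, so the $\ge r$ conclusion still holds. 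The endpoint analysis is the only step that requires careful attention; the root counting, geometric bound, and arithmetic combining them are routine.
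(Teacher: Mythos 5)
Your argument is correct, but it is not the paper's. The paper avoids measure theory altogether: it radially projects every root $w$ of every nonzero truncation onto the circle and forms the set $S=\{\tfrac{w}{|w|}\nu^j\}$ of at most $\tfrac{(d-1)d^2}{2}$ points, then pigeonholes on the largest angular gap between adjacent points of $S$ (which is at least the average $\tfrac{4\pi}{(d-1)d^2}$) and takes $z_0$ to be the midpoint of that gap; the linear distance from a unit vector to any point whose argument differs by $\theta\le\pi/2$ is at least $\sin\theta$, which yields the bound directly with a non-strict inequality and no borderline case. Your forbidden-arc covering argument is the dual picture (total excluded length versus largest gap), and it is telling that your computation lands exactly on $2\pi$: the extremal configuration of equally spaced projected roots saturates both arguments, so the constant is the same either way. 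What your route costs is precisely the tightness repair --- the case split plus the observation that finitely many disjoint open arcs cannot cover the connected circle, so an endpoint survives with the non-strict inequality; this is correct but is the one genuinely delicate step, and you should also note that a root at the origin makes your formula $\cos(\theta-\phi)>(1+\rho^2-r^2)/(2\rho)$ degenerate (harmlessly, since its forbidden arc is empty because $r<1$), whereas your optimization over $\rho$ with worst case $\rho=\sqrt{1-r^2}$ replaces the paper's cruder but simpler radial-projection step. What the paper's route buys is a constructive recipe for $z_0$ (midpoint of the widest gap), which matters for the algorithmic claims elsewhere in the paper; what yours buys is a template that would generalize more readily if one wanted to exclude sets other than arcs.
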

\begin{proof}
For any $n$ from $1$ to $d$, let $N_n$ be the number of distinct roots of the $n$-th truncation of $p$ if that truncation is nonzero, and let $N_n=0$ if the $n$-th truncation of $p$ is a zero polynomial. Then the number of distinct roots of all nonzero truncations of $p$ is
$$
N\le\sum_{n=1}^dN_n\le\sum_{n=1}^d(n-1)=\frac{(d-1)d}2
$$
Then for the set $S=\{\frac{w}{|w|}\nu^j|\;j=1,\dots,d\text{ and }w\text{ is a root of a nonzero truncation of }p\}$, we know $|S|=Nd\le\frac{(d-1)d^2}2$. If the elements of $S$ are ordered by their angle around the unit circle, then the average angle between adjacent elements is $\frac{2\pi}{|S|}\ge\frac{4\pi}{(d-1)d^2}$, and so there is at least one pair of adjacent elements that is separated by at least this amount. Thus, if we let $z_0$ be the midpoint between these two maximally separated elements on the unit circle, then the angle between $z_0$ and any element of $S$ is at least $\frac{2\pi}{(d-1)d^2}$. Thus the linear distance between $z_0$ and any element of the set $\{w\nu^j|\;j=1,\dots,d\text{ and }w\text{ is a zero of a truncation of }p\}$ is at least $\sin(\frac{2\pi}{(d-1)d^2})$.
\end{proof}
With a lower bound on the distance, a lower bound on the minimum magnitude of any component of the reduced vector can be obtained.

\begin{lemma}
\label{lem:mag-bnd}
Let $r\le1$. For any polynomial $p\in\mathcal P_d$, if there exists a $z_0$ on the unit circle such that the linear distance between any element of $\{\nu^jz_0\}_{j=1}^d$ and any zeros of any nonzero truncations of $p$ is at least $r$, then for all $j$ from $1$ to $d$
$$
|p(\nu^jz_0)|\ge\frac{r^\frac{(d-1)d}2\left(\frac{d-1}{2d}\right)^d\frac2{d-1}}{\left(\prod_{k=0}^{d-1}(r^k+1)\right)} \|\hat p\|_1 \, ,
$$
where $\|\hat p\|_1 = \sum_{j=0}^{d-1} |c_j|$, $p(z) = \sum_{j=0}^{d-1} c_j z^j$.
\end{lemma}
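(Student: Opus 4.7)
The plan is to combine Lemma~\ref{lem:geom-trick}, which forces some coefficient $|c_n|$ of $p$ to be a nontrivial fraction of $\|\hat p\|_1$, with the factorization of the corresponding truncation $p_{n+1}$ through its roots, so that the distance hypothesis converts the root bound into a pointwise lower bound at $\nu^j z_0$, together with a tail estimate for $p - p_{n+1}$.

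Concretely, I would first apply Lemma~\ref{lem:geom-trick} to the coefficient vector $(|c_0|, |c_1|, \ldots, |c_{d-1}|)$, with a specific choice of $t$ such as $t = \frac{d-1}{2d}$ (suggested by the appearance of $\bigl(\tfrac{d-1}{2d}\bigr)^d$ in the statement), and select $n$ to be the \emph{largest} admissible index. The maximality of $n$ gives two things at once: the lower bound $|c_n| \ge \|\hat p\|_1 t^{d-n-1}(1-t)$, and complementary upper bounds $|c_k| < \|\hat p\|_1 t^{d-k-1}(1-t)$ for each $k > n$, which summed as a geometric series produce a usable tail estimate $\sum_{k > n}|c_k| < \|\hat p\|_1 (1 - t^{d-n-1})$. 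Since $c_n \neq 0$, the $(n+1)$-th truncation is nonzero of degree exactly $n$ and factors as $p_{n+1}(z) = c_n \prod_{k=1}^n (z - \zeta_k)$, so applying the hypothesis to this nonzero truncation yields $|p_{n+1}(\nu^j z_0)| \ge |c_n| r^n$. A triangle inequality against the tail estimate then provides a candidate lower bound on $|p(\nu^j z_0)|$.

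The hard part is matching the exact form of the constant, in particular the denominator $\prod_{k=0}^{d-1}(r^k+1)$ and the power $r^{d(d-1)/2} = r^{1+2+\cdots+(d-1)}$. The appearance of this specific exponent suggests that each of the $d$ coefficient layers contributes an $r^k$ factor, pointing either to an iterated application of Lemma~\ref{lem:geom-trick} across the successive tails of $p$, or to a Vieta-type refinement that replaces the triangle inequality bound $|c_n| \le \|\hat p\|_1$ by the sharper $\|\widehat{p_{n+1}}\|_1 \le |c_n|\prod_k(1+|\zeta_k|)$, together with auxiliary control on the $|\zeta_k|$ coming from the distance hypothesis and from the geometric arrangement of the sample points $\{\nu^j z_0\}_{j=1}^d$ evenly spaced around the unit circle (possibly via a case analysis depending on whether each $|\zeta_k|$ lies inside or outside the unit disk, using invariance of magnitudes on the circle under reflection of roots). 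Reconciling all the constants exactly is where the technical heart of the argument lies, and the bookkeeping will need to be performed with care.
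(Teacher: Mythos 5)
There is a genuine gap, and it sits exactly where you flag the ``hard part.'' Your main line --- take the \emph{largest} index $n$ from Lemma~\ref{lem:geom-trick}, bound $|p_{n+1}(\nu^jz_0)|\ge |c_n|r^n$ via the factored form, and subtract the tail $\sum_{k>n}|c_k|$ --- cannot close. The maximality of $n$ gives the tail bound $\sum_{k>n}|c_k|<\|\hat p\|_1(1-t^{d-n-1})$, which is of order $\|\hat p\|_1$ (already $\approx\|\hat p\|_1/2$ when $n=d-2$), whereas your main term satisfies $|c_n|r^n\le\|\hat p\|_1 r^n$, and in the application $r=\sin\bigl(\tfrac{2\pi}{(d-1)d^2}\bigr)$ is tiny. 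The triangle inequality therefore yields a negative quantity. The root of the problem is the direction of the truncation: choosing the largest good index forces you to subtract all the \emph{higher} coefficients in one step, and nothing controls them by anything proportional to $|c_n|r^n$.

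The paper instead takes the \emph{smallest} index $n_0$ from Lemma~\ref{lem:geom-trick}, so that all \emph{lower} coefficients are geometrically dominated by $|c_{n_0}|$ and a single reverse triangle inequality bounds the truncation at level $n_0$ below by $\|\hat p\|_1\bigl(\tfrac{d-1}{2d}\bigr)^{d-n_0}\tfrac{2}{d-1}$ with no use of $r$ at all. It then climbs from $n_0$ up to $d$ one degree at a time, and at each degree $n$ performs a case analysis on the new leading coefficient against a threshold $\tau_n=\tfrac{m_n}{r^n+1}$, where $m_n$ is the current lower bound: if $|c_n|>\tau_n$ the new truncation is nonzero and its factored form plus the root-distance hypothesis give $|p_{n+1}(\nu^jz_0)|\ge|c_n|r^n>\tfrac{m_n r^n}{r^n+1}$; if $|c_n|\le\tau_n$ the triangle inequality gives $|p_{n+1}(\nu^jz_0)|\ge m_n-\tau_n=\tfrac{m_n r^n}{r^n+1}$. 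Each step costs exactly a factor $\tfrac{r^n}{r^n+1}$, and the product over $n$ is what produces $r^{d(d-1)/2}/\prod_k(r^k+1)$. Your closing paragraph gestures at ``an iterated application'' or a Vieta-type refinement, but neither identifies this threshold dichotomy, which is the essential idea; the Vieta route in particular has no mechanism for handling a polynomial whose top coefficients are all very small, which is precisely the case the truncation induction is designed to absorb.
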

\begin{proof}
Let $n_0$ be the smallest $n$ obtained by applying Lemma \ref{lem:geom-trick} to $\hat p$ and $t=\frac{d-1}{2d}$. Then if we let $\hat p=(c_j)_{j=0}^{d-1}$, so that $p(z)=\sum_{j=0}^{d-1}c_jz^j$, we have
$$
|c_{n_0}|
\ge\|\hat p\|_1\left(\frac{d-1}{2d}\right)^{d-n_0}\left(\left(\frac{d-1}{2d}\right)^{-1}-1\right)
=\|\hat p\|_1\left(\frac{d-1}{2d}\right)^{d-n_0}\frac{d+1}{d-1}
$$
and for all $j<n_0$
$$
|c_j|
<\|\hat p\|_1\left(\frac{d-1}{2d}\right)^{d-j}\left(\left(\frac{d-1}{2d}\right)^{-1}-1\right)
=\|\hat p\|_1\left(\frac{d-1}{2d}\right)^{d-j}\frac{d+1}{d-1} \, .
$$
Let
$$
m(n_0,n)=\frac{r^\frac{(n-1)n}2\|\hat p\|_1\left(\frac{d-1}{2d}\right)^{d-n_0}\frac2{d-1}}{\left(\prod_{k=n_0}^{n-1}(r^k+1)\right)}
$$
We prove, by induction on $n$ from $n_0$ to $d$, that $|p_n(\nu^jz_0)|\ge m(n_0,n)$ for the $n$-th truncation $p_n\in\mathcal P_n$, and for all $j$ from $1$ to $d$.

For the base case $n=n_0$, we know that
\begin{align*}
|c_{n_0}|-\sum_{j=0}^{n_0-1}|c_j|
\ge&\|\hat p\|_1\left(\left(\frac{d-1}{2d}\right)^{d-n_0}-\sum_{j=0}^{n_0-1}\left(\frac{d-1}{2d}\right)^{d-j}\right)\frac{d+1}{d-1}\\
=&\|\hat p\|_1\left(\left(\frac{d-1}{2d}\right)^{d-n_0}-\sum_{l=d-n_0+1}^d\left(\frac{d-1}{2d}\right)^l\right)\frac{d+1}{d-1}\\
\ge&\|\hat p\|_1\left(\left(\frac{d-1}{2d}\right)^{d-n_0}-\frac{\left(\frac{d-1}{2d}\right)^{d-n_0+1}}{1-\left(\frac{d-1}{2d}\right)}\right)\frac{d+1}{d-1}\\
=&\|\hat p\|_1\left(\frac{d-1}{2d}\right)^{d-n_0}\frac2{d-1}
\end{align*}
and equality only holds if $n_0=0$. Then $|c_{n_0}|\ge\sum_{j=0}^{n_0-1}|c_j|$ and so
\begin{align*}
|p_{n_0}(\nu^jz_0)|
\ge&|c_{n_0}|-\sum_{j=0}^{n_0-1}|c_j|\\
\ge&\|\hat p\|_1\left(\frac{d-1}{2d}\right)^{d-n_0}\frac2{d-1}\\
\ge&\frac{r^\frac{(n_0-1)n_0}2\|\hat p\|_1\left(\frac{d-1}{2d}\right)^{d-n_0}\frac2{d-1}}{\left(\prod_{k=n_0}^{n_0-1}(r^k+1)\right)}\\
=&m(n_0,n_0) \, .
\end{align*}

For the inductive step, assume that we have proven that $|p_n(\nu^jz_0)|\ge m(n_0,n)$. Then we choose a threshold $\tau_n=\frac{m(n_0,n)}{r^n+1}$. If the leading coefficient $c_n$ of $p_{n+1}$ satisfies $|c_n|>\tau_n$, then $p_{n+1}$ is clearly a nonzero truncation of $p$, so by using the factored form of $p_{n+1}$, for all $j$ from $1$ to $d$,
$$
|p_{n+1}(\nu^jz_0)|\ge|c_n|r^n>\tau_nr^n=\frac{m(n_0,n)r^n}{r^n+1} \, .
$$
Otherwise, if the leading coefficient satisfies $|c_n|\le\tau_n\le m(n_0,n)$, then for all $j$ from $1$ to $d$
$$
|p_{n+1}(\nu^jz_0)|\ge m(n_0,n)-|c_n|\ge m(n_0,n)-\tau_n=m(n_0,n)-\frac{m(n_0,n)}{r^n+1}=\frac{m(n_0,n)r^n}{r^n+1} \, .
$$
Either way, for all $j$ from $1$ to $d$,
$$
|p_{n+1}(\nu^jz_0)|\ge\frac{m(n_0,n)r^n}{r^n+1}=\frac{r^\frac{(n-1)n}2\|\hat p\|_1\left(\frac{d-1}{2d}\right)^{d-n_0}\frac2{d-1}r^n}{\left(\prod_{k=n_0}^{n}(r^k+1)\right)}=m(n_0,n+1) \, .
$$
Thus, for all $j$ from $1$ to $d$,
$$
|p(\nu^jz_0)|
=|p_d(\nu^jz_0)|\ge m(n_0,d)
=\frac{r^\frac{(d-1)d}2\|\hat p\|_1\left(\frac{d-1}{2d}\right)^{d-n_0}\frac2{d-1}}{\left(\prod_{k=n_0}^{d-1}(r^k+1)\right)}
\ge\frac{r^\frac{(d-1)d}2\|\hat p\|_1\left(\frac{d-1}{2d}\right)^d\frac2{d-1}}{\left(\prod_{k=0}^{d-1}(r^k+1)\right)}
$$
\end{proof}

Using the $r$ from Lemma \ref{lem:dist-bnd} in the bound in the equation in Lemma \ref{lem:mag-bnd} gives us the desired lower bound. Note that the bound in Lemma \ref{lem:dist-bnd} was obtained by showing a worst case of equally spaced roots and the bound in Lemma \ref{lem:mag-bnd} was obtained by showing a worst case of roots that are bunched together. Thus, the lower bound on the minimum magnitude obtained by combining Lemma \ref{lem:dist-bnd} and Lemma \ref{lem:mag-bnd} will not be achieved for any polynomial of degree greater than $2$ and is thus not the greatest lower bound for higher dimensions.

\subsection{Recovery algorithms for full vectors in the presence of noise}

We also need to be able to estimate the error in the full vector recovery procedure if we know a lower bound for the magnitude measurements.

\begin{lemma}
\label{lem:full-induct-noise}
Let $m>0$. For any vectors $x\in\C^d$ and $\eps\in\R^{3d-2}$, if $\{e_j\}_{j=1}^d$ is an orthonormal basis such that for all $j$ from $1$ to $d$, $|\langle x,e_j \rangle|^2-|\eps_j|\ge m\|x\|_\infty^2$, and $C=\frac{(1+\sqrt2)\|\eps\|_\infty+\|x\|_\infty^2}{m\|x\|_\infty^2}$, then a vector $y$ may be obtained such that for all $k$ from $1$ to $d$,
$$
\left|y_k-\frac{\overline{x_1}}{|x_1|}x_k\right|\le\left(\frac{2+\sqrt2}m\frac{1-C^{k-1}}{1-C}+\frac{C^{k-1}}{2\sqrt m}\right)\frac{\|\eps\|_\infty}{\|x\|_\infty}
$$
by using the values of $\widetilde{\mathcal A}_{\{e_j\}}(x,\eps)$.
\end{lemma}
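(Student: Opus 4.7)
My plan is to define $y$ iteratively, mimicking the noiseless phase propagation from the preceding proposition but with the noisy measurements $\widetilde{\mathcal A}_{\{e_j\}}(x,\eps)$ replacing $\mathcal A_{\{e_j\}}(x)$, and then track the propagation of the noise term by term. Writing $z_k := (\overline{x_1}/|x_1|)\,x_k$ for the target vector, the base case sets $y_1 = \sqrt{\widetilde{\mathcal A}(x,\eps)_1} = \sqrt{|x_1|^2+\eps_1}$. The hypothesis $|x_1|^2-|\eps_1|\ge m\|x\|_\infty^2$ forces both $|x_1|$ and $\sqrt{|x_1|^2+\eps_1}$ to be at least $\sqrt m\,\|x\|_\infty$, so the elementary identity $|\sqrt a - \sqrt b| = |a-b|/(\sqrt a + \sqrt b)$ gives $|y_1 - |x_1|| \le \|\eps\|_\infty/(2\sqrt m\,\|x\|_\infty)$, which matches the stated bound at $k=1$.

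For the inductive step I let $P_k := \overline{x_k}x_{k+1}/|x_k|^2$ denote the noiseless polarization quotient, which satisfies $P_k z_k = z_{k+1}$, and let $\tilde P_k$ denote the analogous quotient formed from the noisy measurements, so that $y_{k+1} = \tilde P_k y_k$. I decompose
\[
y_{k+1} - z_{k+1} = \tilde P_k\,(y_k - z_k) + (\tilde P_k - P_k)\,z_k
\]
and estimate the two pieces separately. The noise in the numerator of $\tilde P_k$ has magnitude at most $2(1+\sqrt 2)\|\eps\|_\infty$, since the coefficients $1-i,\,1-i,\,-1,\,i$ contribute $\sqrt 2+\sqrt 2+1+1$; together with $|2\overline{x_k}x_{k+1}|\le 2\|x\|_\infty^2$ and $2(|x_k|^2+\eps_k)\ge 2m\|x\|_\infty^2$ this gives $|\tilde P_k|\le C$.

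The crux of the argument is the estimate on the second term. A direct computation yields
\[
\tilde P_k - P_k = \frac{E_{\mathrm{num}}}{2(|x_k|^2+\eps_k)} - \frac{\eps_k\cdot 2\overline{x_k}x_{k+1}}{2|x_k|^2(|x_k|^2+\eps_k)},
\]
where $E_{\mathrm{num}}$ denotes the numerator noise. Bounding $|\tilde P_k - P_k|$ and $|z_k|$ in isolation introduces an unwanted factor of $|x_{k+1}|/|x_k|\le m^{-1/2}$ that spoils the target bound. The trick is to use the cancellation $2\overline{x_k}x_{k+1}\cdot z_k = 2|x_k|^2\, z_{k+1}$ to rewrite
\[
(\tilde P_k - P_k)\,z_k \;=\; \frac{E_{\mathrm{num}}\,z_k}{2(|x_k|^2+\eps_k)} \;-\; \frac{\eps_k\,z_{k+1}}{|x_k|^2+\eps_k}.
\]
Since $|z_k|,|z_{k+1}|\le\|x\|_\infty$ and $|x_k|^2+\eps_k\ge m\|x\|_\infty^2$, this gives $|(\tilde P_k - P_k)\,z_k| \le \frac{2+\sqrt 2}{m}\,\frac{\|\eps\|_\infty}{\|x\|_\infty}$.

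Combining the two estimates produces the first-order linear recursion
\[
|y_{k+1}-z_{k+1}|\;\le\; C\,|y_k - z_k| \;+\; \frac{2+\sqrt 2}{m}\,\frac{\|\eps\|_\infty}{\|x\|_\infty},
\]
whose closed-form solution with the base case $|y_1-z_1| \le \frac{1}{2\sqrt m}\,\frac{\|\eps\|_\infty}{\|x\|_\infty}$ is exactly the bound claimed in the lemma. The principal obstacle I expect is spotting the algebraic cancellation that keeps the estimate polynomial in $1/m$ rather than in $1/\sqrt m$; once that identity is recognized the remainder reduces to verifying the solution of the scalar recurrence $b_{k+1}=Cb_k+(2+\sqrt 2)/m$.
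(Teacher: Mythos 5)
Your proof is correct and takes essentially the same route as the paper: noisy phase propagation with the recursion $E_{k+1}\le C\,E_k+\frac{2+\sqrt2}{m}\frac{\|\eps\|_\infty}{\|x\|_\infty}$, where your splitting $\tilde P_k(y_k-z_k)+(\tilde P_k-P_k)z_k$ is just a regrouping of the paper's single-fraction decomposition and yields identical constants. The cancellation $\overline{x_k}x_{k+1}z_k=|x_k|^2z_{k+1}$ that you highlight is exactly the step the paper performs when it cancels $|x_k|^2$ from the term $\overline{x_k}x_{k+1}\frac{\overline{x_1}}{|x_1|}x_k\eps_k$.
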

\begin{proof}
For the base case, we let $y_1=\sqrt{|\langle x,e_1 \rangle|^2+\eps_1}$. Then by the mean value theorem and concavity of the square root, there exists a $\xi$ between $|\langle x,e_1 \rangle|^2+\eps_1$ and $|\langle x,e_1 \rangle|^2$ (so that $\xi\ge|\langle x,e_1 \rangle|^2-|\eps_1|\ge m\|x\|_\infty^2>0$) such that
\begin{align*}
\left|y_1-\frac{\overline{x_1}}{|x_1|}x_1\right|
=&\left|\sqrt{|\langle x,e_1 \rangle|^2+\eps_1}-\sqrt{|\langle x,e_1 \rangle|^2}\right|\\
=&\frac{|\eps_1|}{2\sqrt\xi}\\
\le&\frac{\|\eps\|_\infty}{2\sqrt m\|x\|_\infty}\\
=&\left(\frac{2+\sqrt2}m\frac{1-C^0}{1-C}+\frac{C^0}{2\sqrt m}\right)\frac{\|\eps\|_\infty}{\|x\|_\infty}
\end{align*}

For the $k$th (with $k<d$) inductive step, we assume that we have constructed $y_k$ with the given information such that $E_k=\left|y_k-\frac{\overline{x_1}}{|x_1|}x_k\right|\le\left(\frac{2+\sqrt2}m\frac{1-C^{k-1}}{1-C}+\frac{C^{k-1}}{2\sqrt m}\right)\frac{\|\eps\|_\infty}{\|x\|_\infty}$. We abbreviate
$$
\widetilde{\overline{x_k}x_{k+1}}
=\frac12
\left( (1-i)\widetilde{\mathcal A}_{\{e_j\}}(x,\eps)_k+(1-i)\widetilde{\mathcal A}_{\{e_j\}}(x,\eps)_{k+1}
-\widetilde{\mathcal A}_{\{e_j\}}(x,\eps)_{k+d}+i \widetilde{\mathcal A}_{\{e_j\}}(x,\eps)_{k+2d+1} \right)
$$
and
$$
y_{k+1}
=\frac{\widetilde{\overline{x_k} {x}_{k+1} } }{\widetilde{\mathcal A}_{\{e_j\}}(x,\eps)_k}y_k \, .
$$
A direct computation shows the error for the approximation of the term used in phase propagation,
\begin{align*}
\left|\widetilde{\overline{x_k}x_{k+1}}-\overline{x_k}x_{k+1}\right|
=&\left|\frac{(1-i)(|x_k|^2+\eps_k)+(1-i)(|x_{k+1}|^2+\eps_{k+1})}2\right.\\
&\qquad-\frac{(|x_k-x_{k+1}|^2+\eps_{d+k})-i(|x_k-ix_{k+1}|^2+\eps_{2d+k-1})}2\\
&\qquad-\frac{(1-i)|x_k|^2+(1-i)|x_{k+1} |^2}2\\
&\qquad\left.+\frac{|x_k-x_{k+1}|^2-i|x_k-ix_{k+1}|^2}2\right|\\
=&\left|\frac{(1-i)\eps_k+(1-i)\eps_{k+1}-\eps_{d+k}+i\eps_{2d+k-1}}2\right|\\
\le&(1+\sqrt2)\|\eps\|_\infty \, .
\end{align*}
We use similar identities to simplify the relationship between the vector and approximate recovery,
\begin{align*}
\left|y_{k+1}-\frac{\overline{x_1}}{|x_1|}x_{k+1}\right|
=&\left|\frac{\widetilde{\overline{x_k}x_{k+1}}}{|\langle x,e_k \rangle|^2+\eps_k}y_k-\frac{\overline{x_k}x_{k+1}}{|x_k|^2}\frac{\overline{x_1}}{|x_1|}x_k\right|\\
=&\left|\frac{|x_k|^2\widetilde{\overline{x_k}x_{k+1}}y_k-\overline{x_k}x_{k+1}\frac{\overline{x_1}}{|x_1|}x_k(|x_k|^2+\eps_k)}{(|x_k|^2+\eps_k)|x_k|^2}\right|\\
=&\left|\frac{|x_k|^2(\widetilde{\overline{x_k}x_{k+1}}-\overline{x_k}x_{k+1})y_k+|x_k|^2\overline{x_k}x_{k+1}(y_k-\frac{\overline{x_1}}{|x_1|}x_k)-\overline{x_k}x_{k+1}\frac{\overline{x_1}}{|x_1|}x_k\eps_k}{(|x_k|^2+\eps_k)|x_k|^2}\right|\\
=&\left|\frac{(\widetilde{\overline{x_k}x_{k+1}}-\overline{x_k}x_{k+1})y_k+\overline{x_k}x_{k+1}(y_k-\frac{\overline{x_1}}{|x_1|}x_k)-x_{k+1}\frac{\overline{x_1}}{|x_1|}\eps_k}{|x_k|^2+\eps_k}\right|
\end{align*}
Next, we estimate using the triangle inequality
\begin{align*}
\left|y_{k+1}-\frac{\overline{x_1}}{|x_1|}x_{k+1}\right|
\le&\frac{\left|(\widetilde{\overline{x_k}x_{k+1}}-\overline{x_k}x_{k+1})y_k+\overline{x_k}x_{k+1}(y_k-\frac{\overline{x_1}}{|x_1|}x_k)-x_{k+1}\frac{\overline{x_1}}{|x_1|}\eps_k\right|}{m\|x\|_\infty^2}\\
\le&\frac{\left|\widetilde{\overline{x_k}x_{k+1}}-\overline{x_k}x_{k+1}\right||y_k|+|\overline{x_k}x_{k+1}||y_k-\frac{\overline{x_1}}{|x_1|}x_k|+|x_{k+1}\frac{\overline{x_1}}{|x_1|}\eps_k|}{m\|x\|_\infty^2}\\
\le&\frac{\left|\widetilde{\overline{x_k}x_{k+1}}-\overline{x_k}x_{k+1}\right|(|y_k-\frac{\overline{x_1}}{|x_1|}x_k|+|x_k|)+|\overline{x_k}x_{k+1}||y_k-\frac{\overline{x_1}}{|x_1|}x_k|+|x_{k+1}\eps_k|}{m\|x\|_\infty^2} \, .
\end{align*}
Finally, recalling that $E_k=|y_k-\frac{\overline{x_1}}{|x_1|}x_k|$ was bounded by the induction assumption,
\begin{align*}
\left|y_{k+1}-\frac{\overline{x_1}}{|x_1|}x_{k+1}\right|
\le&\frac{(1+\sqrt2)\|\eps\|_\infty(E_k+\|x\|_\infty)+\|x\|_\infty^2E_k+\|x\|_\infty\|\eps\|_\infty}{m\|x\|_\infty^2}\\
=&\frac{2+\sqrt2}{m\|x\|_\infty^2}\|x\|_\infty\|\eps\|_\infty
+\frac{(1+\sqrt2)\|\eps\|_\infty+\|x\|_\infty^2}{m\|x\|_\infty^2}E_k\\
=&\frac{2+\sqrt2}{m\|x\|_\infty}\|\eps\|_\infty+CE_k\\
\le&\frac{2+\sqrt2}{m\|x\|_\infty}\|\eps\|_\infty+C\left(\frac{2+\sqrt2}m\frac{1-C^{k-1}}{1-C}+\frac{C^{k-1}}{2\sqrt m}\right)\frac{\|\eps\|_\infty}{\|x\|_\infty}\\
=&\left(\frac{2+\sqrt2}m+\frac{2+\sqrt2}m\frac{C-C^k}{1-C}+\frac{C^k}{2\sqrt m}\right)\frac{\|\eps\|_\infty}{\|x\|_\infty}\\
=&\left(\frac{2+\sqrt2}m\frac{1-C^k}{1-C}+\frac{C^k}{2\sqrt m}\right)\frac{\|\eps\|_\infty}{\|x\|_\infty} \, .
\end{align*}
\end{proof}

\begin{rem}
\label{rem:incr}
Note that in Lemma \ref{lem:full-induct-noise}, $C\ge\frac{\|x\|_\infty^2}m\ge1$, and that the error bound
$$
\left(\frac{2+\sqrt2}m\frac{1-C^{k-1}}{1-C}+\frac{C^{k-1}}{2\sqrt m}\right)\frac{\|\eps\|_\infty}{\|x\|_\infty}
=\left(\frac{2+\sqrt2}m\left(\sum_{j=0}^{k-2}C^j\right)+\frac{C^{k-1}}{2\sqrt m}\right)\frac{\|\eps\|_\infty}{\|x\|_\infty}
$$
is increasing in $C$.
\end{rem}

\begin{thm}
Let $m>0$. For any vectors $x\in\C^d\backslash\{0\}$ and $\eps\in\R^{3d-2}$, if $\{e_j\}_{j=1}^d$ is an orthonormal basis such that for all $j$ from $1$ to $d$, $|\langle x,e_j \rangle|^2-|\eps_j|\ge m\|x\|_\infty^2$, and $\tilde x$ is the vector recovered from Lemma \ref{lem:full-induct-noise}, then an operator $\widetilde T_x:\C^d\to\C^{d-1}$ with null space equal to the set of complex multiples of $\tilde x$ can be constructed using the values of $\widetilde{\mathcal A}_{\{e_j\}}(x,\eps)$ and the basis $\{e_j\}_{j=1}^d$.
\end{thm}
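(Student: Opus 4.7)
The plan is to mimic the noiseless kernel method construction, replacing every exact quantity by its perturbed counterpart that is accessible from $\widetilde{\mathcal A}_{\{e_j\}}(x,\eps)$. Concretely, I would define $\widetilde T_x:\C^d\to\C^{d-1}$ componentwise by
\[
(\widetilde T_x y)_k \;=\; \widetilde{\mathcal A}_{\{e_j\}}(x,\eps)_k\, y_{k+1} \;-\; \widetilde{\overline{x_k}x_{k+1}}\; y_k, \qquad k=1,\ldots,d-1,
\]
where $\widetilde{\overline{x_k}x_{k+1}}$ is the same polarization-like combination of four perturbed measurements that already appeared in the proof of Lemma~\ref{lem:full-induct-noise}. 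Both coefficients are explicit functions of the data and the basis, so $\widetilde T_x$ is constructible exactly as required by the theorem statement.

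I would then verify the two inclusions that identify the null space. The inclusion $\C\tilde x\subseteq\ker\widetilde T_x$ is immediate from the way $\tilde x$ is built in Lemma~\ref{lem:full-induct-noise}: the recursion $\tilde x_{k+1}=\widetilde{\overline{x_k}x_{k+1}}\,\tilde x_k/\widetilde{\mathcal A}_{\{e_j\}}(x,\eps)_k$ rearranges exactly to $(\widetilde T_x\tilde x)_k=0$ for every $k$. Conversely, if $\widetilde T_x y=0$, the hypothesis $|\langle x,e_k\rangle|^2-|\eps_k|\ge m\|x\|_\infty^2>0$ ensures $\widetilde{\mathcal A}_{\{e_j\}}(x,\eps)_k>0$, so I may solve
\[
y_{k+1}=\frac{\widetilde{\overline{x_k}x_{k+1}}}{\widetilde{\mathcal A}_{\{e_j\}}(x,\eps)_k}\,y_k,
\]
which is the same recursion that defines $\tilde x$ starting from the strictly positive $\tilde x_1=\sqrt{|\langle x,e_1\rangle|^2+\eps_1}\ge\sqrt m\,\|x\|_\infty$. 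Setting $c=y_1/\tilde x_1$, a one-line induction on $k$ then gives $y_k=c\tilde x_k$ for each $k$, so $y\in\C\tilde x$.

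The main subtlety I anticipate is that, unlike the noiseless case where fullness of $x$ forces $x_k\ne 0$ for every $k$, here the estimated inner product $\widetilde{\overline{x_k}x_{k+1}}$ can in principle vanish, and that would force $\tilde x_{k+1}$ and all later $\tilde x_j$ to be zero. The induction still goes through: the very same recursion forces $y_{k+1}=0$ and thereafter $y_j=0=c\tilde x_j$, so the identity $y_k=c\tilde x_k$ persists trivially. Once this degenerate case is dispatched, the null space is genuinely one-dimensional and equal to $\C\tilde x$, and the rest of the argument is a direct transcription of the noiseless kernel method proof.
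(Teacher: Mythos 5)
Your proposal is correct and follows essentially the same route as the paper's own proof: the paper defines the same operator $\widetilde T_x=\widetilde M_{|x|^2}S-\widetilde M_{\overline{x}Sx}$ (your componentwise formula is exactly this) and runs the same two-inclusion induction, using $\widetilde{\mathcal A}_{\{e_j\}}(x,\eps)_k\ge m\|x\|_\infty^2>0$ for invertibility and $\tilde x_1>0$ to anchor the recursion. Your explicit treatment of the degenerate case where $\widetilde{\overline{x_k}x_{k+1}}=0$ is a small but welcome addition that the paper leaves implicit.
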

\begin{proof}
As in Lemma \ref{lem:full-induct-noise}, we let
$$
\widetilde{\overline{x_k}x_{k+1}}
=\frac12
\left( (1-i)\widetilde{\mathcal A}_{\{e_j\}}(x,\eps)_k+(1-i)\widetilde{\mathcal A}_{\{e_j\}}(x,\eps)_{k+1}
-\widetilde{\mathcal A}_{\{e_j\}}(x,\eps)_{k+d}+i \widetilde{\mathcal A}_{\{e_j\}}(x,\eps)_{k+2d+1} \right) \, .
$$
With respect to the basis $\{e_j\}_{j=1}^{d}$, we define the multiplication operator $\widetilde M_{\overline{x} Sx}:\C^{d}\to\C^{d}$ by the map 
$\widetilde M_{\overline{x_j}x_{j+1}}\left((y_j)_{j=1}^{d}\right)=(\widetilde{\overline{x_k}x_{k+1}}y_j)_{j=1}^{d}$,
where as in the noiseless case, we set $x_{d+1}=0$.
Similarly, we define the multiplication operator $\widetilde M_{|x|^2}:\C^{d}\to\C^{d}$ by the map $\widetilde M_{|x|^2}\left((y_j)_{j=1}^{d}\right)=((|x_j|^2+\eps_j)y_j)_{j=1}^{d}$. Note that $\widetilde M_{|x_j|^2}$ is invertible if and only if $|\langle x,e_j \rangle|^2+\eps_j\ne0$ for all $j$ from $1$ to $d$. This is true because
$$
|\langle x,e_j \rangle|^2+\eps_j
\ge|\langle x,e_j \rangle|^2-|\eps_j|
\ge m\|x\|_\infty^2>0 \, .
$$
Let 
$S:\C^d\to\C^{d}$ be the left shift operator $S\left((y_j)_{j=1}^d\right)=(y_{j+1})_{j=1}^{d}, y_{d+1}=0$ as before. With these operators, we define the operator 
$\widetilde T_x:\C^d\to\C^{d}$ as $\widetilde T_x=\widetilde M_{|x|^2}S-\widetilde M_{\overline{x}S x}$. Then for any $c\in\C$,
\begin{align*}
\widetilde T_x(c\tilde x)
=&\widetilde M_{|x|^2}S(c\tilde x)-\widetilde M_{\overline{x}Sx}(c\tilde x)\\
=&\widetilde M_{|x|^2}\left((c\tilde x_{j+1})_{j=1}^{d}\right)-\widetilde M_{\overline{x}Sx}\left((c\tilde x_j)_{j=1}^{d}\right)\\
=&\widetilde M_{|x|^2}\left(\left(c\frac{\widetilde{\overline{x_j}x_{j+1}}}{|\langle x,e_j \rangle|^2+\eps_j}\tilde x_j\right)_{j=1}^{d}\right)
-\widetilde M_{\overline{x}Sx}\left((c\tilde x_j)_{j=1}^{d}\right)\\
=&\left(c\widetilde{\overline{x_j}x_{j+1}}\tilde x_j\right)_{j=1}^{d}-\left(\widetilde{\overline{x_j}x_{j+1}}c\tilde x_j\right)_{j=1}^{d}\\
=&0
\end{align*}
so any complex multiple of $\tilde x$ is in the null space of this operator.

Conversely, assume that $y$ is in the null space of $\widetilde T_x$. We will use an inductive argument to show that for any $j$ from $1$ to $d$, $y_j=\tilde x_j\frac{y_1}{\tilde x_1}$. Note that these quotients are  well defined, because
$$
\tilde x_1=\sqrt{|\langle x,e_1 \rangle|^2+\eps_1}\ge\sqrt{|\langle x,e_1 \rangle|^2-|\eps_1|}\ge\sqrt m\|x\|_\infty>0 \, .
$$
Then the base case $j=1$ is trivial. For the inductive step, note that for any $j$ from $1$ to $d-1$,
$$
y_{j+1}-\frac{\widetilde{\overline{x_j}x_{j+1}}}{|\langle x,e_j \rangle|^2+\eps_j}y_j
=(\widetilde M_{|x|^2}^{-1}\left(\widetilde M_{|x|^2}Sy-\widetilde M_{\overline{x}Sx}y\right))_j
=(\widetilde M_{|x|^2}^{-1}\widetilde T_xy)_j=0 \, .
$$
Thus, for any $j$ from $1$ to $d-1$,
$$
y_{j+1}
=\frac{\widetilde{\overline{x_j}x_{j+1}}}{|\langle x,e_j \rangle|^2+\eps_j}y_j
=\frac{\widetilde{\overline{x_j}x_{j+1}}}{|\langle x,e_j \rangle|^2+\eps_j}\tilde x_j\frac{y_1}{\tilde x_1}
=\tilde x_{j+1}\frac{y_1}{\tilde x_1}
$$
which shows that $y$ is a complex multiple of $\tilde x$.
\end{proof}

\begin{cor}
If $x$, $\eps$, $\{e_j\}_{j=0}^d$, $m$, and $\widetilde T_x$ are as in the preceding theorem, $\tilde x$ is the vector recovered from Lemma \ref{lem:full-induct-noise}, and $\hat x$ is the vector that solves
\begin{align*}
\hat x & = \arg\min \{ \|\widetilde T_xy\|^2: \|y\|_2^2=\sum_{j=1}^d\left(|\langle x,e_j \rangle|^2+\eps_j\right) \}
\end{align*}
then $\rho([\hat x],[x])\le2\left\|\tilde x-\frac{\overline{x_1}}{|x_1|}x\right\|_2+\frac{\sqrt d\|\eps\|_\infty}{2\sqrt m\|x\|_\infty}$
\end{cor}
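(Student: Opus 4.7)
The strategy is to exploit the structure of the minimization. The preceding theorem identified the null space of $\widetilde T_x$ as $\C \tilde x$, so the minimum of $\|\widetilde T_x y\|_2^2$ over all $y$ is zero, attained precisely on complex multiples of $\tilde x$. Consequently, any minimizer subject to the norm constraint must be $\hat x = \omega_0 \lambda \tilde x$ for some unimodular $\omega_0$, with $\lambda = \sqrt{S}/\|\tilde x\|_2$ and $S := \sum_{j=1}^d(|\langle x,e_j\rangle|^2 + \eps_j)$, which equals $\|x\|_2^2 + \sum_{j=1}^d \eps_j$ by Parseval's identity for the orthonormal basis $\{e_j\}$.

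Next, I would bound the quotient distance via an auxiliary vector $\tilde x' := (\|x\|_2/\|\tilde x\|_2)\,\tilde x$, namely the rescaling of $\tilde x$ to the norm of $x$, and the triangle inequality for $\rho$:
\begin{equation*}
\rho([\hat x],[x]) \le \rho([\hat x],[\tilde x']) + \rho([\tilde x'],[x]).
\end{equation*}
Since $\hat x$ and $\tilde x'$ are both complex multiples of $\tilde x$ with moduli $\sqrt{S}$ and $\|x\|_2$, the minimization over $\mathbb T$ reduces to minimizing the difference of two complex numbers with those moduli, giving $\rho([\hat x],[\tilde x']) = |\sqrt{S} - \|x\|_2|$. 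For the second term, I would pick $\omega = \overline{x_1}/|x_1|$ in the minimization and apply the triangle inequality:
\begin{equation*}
\|\tilde x' - \omega x\|_2 \le \|\tilde x' - \tilde x\|_2 + \|\tilde x - \omega x\|_2 = \bigl|\|x\|_2 - \|\tilde x\|_2\bigr| + E \le 2E,
\end{equation*}
where $E := \|\tilde x - (\overline{x_1}/|x_1|)\,x\|_2$ and the last step uses the reverse triangle inequality together with $\|\omega x\|_2 = \|x\|_2$.

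Finally, I would bound $|\sqrt{S} - \|x\|_2|$ by rationalizing: $|\sqrt{S} - \|x\|_2| = |\sum_{j=1}^d \eps_j|/(\sqrt{S} + \|x\|_2)$. The hypothesis $|\langle x, e_j\rangle|^2 - |\eps_j| \ge m\|x\|_\infty^2$ yields both $|\langle x, e_j\rangle|^2 + \eps_j \ge m\|x\|_\infty^2$ and $|\langle x, e_j\rangle|^2 \ge m\|x\|_\infty^2$; summing over $j$ gives $S \ge dm\|x\|_\infty^2$ and $\|x\|_2^2 \ge dm\|x\|_\infty^2$, so $\sqrt{S} + \|x\|_2 \ge 2\sqrt{dm}\,\|x\|_\infty$. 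Combined with $|\sum_{j=1}^d \eps_j| \le d\|\eps\|_\infty$, this gives $|\sqrt{S} - \|x\|_2| \le \sqrt{d}\,\|\eps\|_\infty/(2\sqrt{m}\,\|x\|_\infty)$, and adding the two estimates produces exactly the claimed bound.

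There is no serious obstacle here; the only subtlety is recognizing that $\hat x$ and $\tilde x'$ differ only by a positive-real rescaling modulo $\mathbb T$, which collapses the first quotient distance to a plain difference of scalar norms. Everything else is routine triangle-inequality bookkeeping together with the lower bounds on $S$ and $\|x\|_2^2$ that follow from the magnitude hypothesis.
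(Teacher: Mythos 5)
Your proposal is correct and follows essentially the same route as the paper: identify the minimizer as a rescaled multiple of $\tilde x$, pass through an intermediate vector via the triangle inequality, absorb the norm mismatch $\bigl|\|x\|_2-\|\tilde x\|_2\bigr|$ into $E$ by the reverse triangle inequality, and bound the scalar term $|\sqrt{S}-\|x\|_2|$ using $S,\|x\|_2^2\ge dm\|x\|_\infty^2$. The only cosmetic differences are that you rationalize the square-root difference where the paper invokes the mean value theorem with concavity, and your intermediate vector is the rescaling $\tilde x'$ rather than $\tilde x$ itself.
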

\begin{proof}
By the mean value theorem and the concavity of the square root, there exists a $\xi$ between $\sum_{j=1}^d\left(|\langle x,e_j \rangle|^2+\eps_j\right)$ and $\sum_{j=1}^d|\langle x,e_j \rangle|^2$ (so that $\xi\ge\sum_{j=1}^d\left(|\langle x,e_j \rangle|^2-|\eps_j|\right)\ge dm\|x\|_\infty^2>0$) such that
$$
\sqrt{\sum_{j=1}^d\left(|\langle x,e_j \rangle|^2+\eps_j\right)}-\sqrt{\sum_{j=1}^d|\langle x,e_j \rangle|^2}
=\frac{\sum_{j=1}^d\eps_j}{2\sqrt\xi}
$$
Because the null space of $\widetilde T_x$ contains only multiples of $\tilde x$, we know $\hat x=\frac{\sqrt{\sum_{j=1}^d\left(|\langle x,e_j \rangle|^2+\eps_j\right)}}{\|\tilde x\|_2}\tilde x$ and thus, using concavity gives the estimate
\begin{align*}
\left\|\hat x-\frac{\overline{x_1}}{|x_1|}x\right\|_2
&\le\|\hat x-\tilde x\|_2+\left\|\tilde x-\frac{\overline{x_1}}{|x_1|}x\right\|_2\\
&=\left\|\frac{\sqrt{\sum_{j=1}^d\left(|\langle x,e_j \rangle|^2+\eps_j\right)}}{\|\tilde x\|_2}\tilde x-\tilde x\right\|_2+\left\|\tilde x-\frac{\overline{x_1}}{|x_1|}x\right\|_2\\
&=\left|\frac{\sqrt{\|x\|_2^2+\sum_{j=1}^d\eps_j}}{\|\tilde x\|_2}-1\right|\|\tilde x\|_2+\left\|\tilde x-\frac{\overline{x_1}}{|x_1|}x\right\|_2\\
&=\left|\sqrt{\|x\|_2^2+\sum_{j=1}^d\eps_j}-\|\tilde x\|_2\right|+\left\|\tilde x-\frac{\overline{x_1}}{|x_1|}x\right\|_2\\
&=\left|\sqrt{\|x\|_2^2}+\frac{\sum_{j=1}^d\eps_j}{2\sqrt\xi}-\|\tilde x\|_2\right|+\left\|\tilde x-\frac{\overline{x_1}}{|x_1|}x\right\|_2\\
&\le\left|\|x\|_2-\|\tilde x\|_2\right|+\frac{\sum_{j=1}^d|\eps_j|}{2\sqrt\xi}+\left\|\tilde x-\frac{\overline{x_1}}{|x_1|}x\right\|_2\\
&\le2\left\|\tilde x-\frac{\overline{x_1}}{|x_1|}x\right\|_2+\frac{\sqrt d\|\eps\|_\infty}{2\sqrt m\|x\|_\infty} \, .
\end{align*}
\end{proof}

\subsection{Main Results}
We begin the main result by showing that for a fixed $p$ with sample values that are bounded away from zero, the recovery error is $O(\|\epsilon\|_\infty)$. 
\begin{thm}
Let $\tilde m>0$. For any nonzero polynomial $p\in\mathcal P_d$, and any $\eps\in\R^{6d-3}$, if there exists a $z_0$ on the unit circle such that $\min\{|p(\nu^jz_0)|^2-(2d-1)\|\eps\|_\infty\}_{j=1}^d\ge\tilde m\|p\|_2^2$, then an approximation $\tilde p\in \mathcal P_d$ can be constructed using the Dirichlet Kernel and the values of $\widetilde{\mathcal A}(p,\eps)$, such that if $\tilde C=\frac{(1+\sqrt2)\sqrt d(2d-1)\|\eps\|_\infty+d\|p\|_2^2}{\sqrt d\tilde m\|p\|_2^2}$ then for some $c_0$ on the unit circle
$$
\|\tilde p-c_0p\|_2\le
\left(\frac{2+\sqrt2}{\tilde m}\frac{d-d\tilde C-1+\tilde C^d}{1-\tilde C}\sqrt d+\frac{1-\tilde C^d}{2\sqrt{\frac{\tilde m}{\sqrt d}}}\right)\frac{d(2d-1)}{(1-\tilde C)}\frac{\|\eps\|_\infty}{\|p\|_2} \, .
$$
\end{thm}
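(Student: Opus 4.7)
The plan is to carry out quantitatively the three-step strategy outlined in the introduction and then pass from the sample-value error to an $L^2$ error on polynomials via Parseval's identity. First, I would use the normalized Dirichlet kernel $D_{d-1}$ to interpolate each of the three blocks of noisy measurements $\widetilde{\mathcal A}(p,\eps)$ to a real trigonometric polynomial of degree $d-1$ on the unit circle. Since $|D_{d-1}(z)|\le 1$ for $|z|=1$ and the interpolation uses $2d-1$ nodes, these interpolants deviate uniformly from the true trigonometric polynomials $|p(z)|^2$, $|p(z)-p(z\nu)|^2$, and $|p(z)-ip(z\nu)|^2$ by at most $(2d-1)\|\eps\|_\infty$. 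Sampling them at the $d$ points $\{z_0\nu^k\}_{k=1}^d$ yields data that I would regard as the output of the noisy measurement map $\widetilde{\mathcal A}_{\{e_k\}}(x,\tilde\eps)$ with sample vector $x=(p(z_0\nu^k))_{k=1}^d\in\C^d$, standard basis $\{e_k\}$, and a noise vector satisfying $\|\tilde\eps\|_\infty\le(2d-1)\|\eps\|_\infty$.

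Next I would apply Lemma \ref{lem:full-induct-noise} to this full-vector problem. Because $\{\frac{1}{\sqrt d}K_{z_0\nu^k}\}_{k=1}^d$ is an orthonormal basis of $\mathcal P_d$, Parseval's identity gives $\|x\|_2^2=d\|p\|_2^2$, hence $\|p\|_2\le\|x\|_\infty\le\sqrt d\,\|p\|_2$. The hypothesis of the theorem reads $|x_k|^2-|\tilde\eps_k|\ge\tilde m\|p\|_2^2$ for every $k$, and upon setting $m=\tilde m\|p\|_2^2/\|x\|_\infty^2$ this becomes precisely the condition $|x_k|^2-|\tilde\eps_k|\ge m\|x\|_\infty^2$ needed to invoke the lemma. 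The lemma then produces a vector $y\in\C^d$ and a unimodular $c_0=\bar x_1/|x_1|$ with the pointwise estimate
$$\left|y_k-c_0x_k\right|\le\left(\frac{2+\sqrt 2}{m}\frac{1-C^{k-1}}{1-C}+\frac{C^{k-1}}{2\sqrt m}\right)\frac{\|\tilde\eps\|_\infty}{\|x\|_\infty},$$
with $C$ as in that lemma. I would then define the approximation $\tilde p\in\mathcal P_d$ to be the unique polynomial of degree at most $d-1$ satisfying $\tilde p(z_0\nu^k)=y_k$.

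Writing both $p$ and $\tilde p$ in the orthonormal basis above, Parseval gives $\|\tilde p-c_0p\|_2^2=\frac{1}{d}\|y-c_0x\|_2^2$, whence $\|\tilde p-c_0p\|_2\le\frac{1}{\sqrt d}\sum_{k=1}^d|y_k-c_0x_k|$. Substituting the pointwise estimate and evaluating the geometric sums
$$\sum_{k=1}^d C^{k-1}=\frac{1-C^d}{1-C},\qquad\sum_{k=1}^d(1-C^{k-1})=\frac{d-dC-1+C^d}{1-C},$$
then inserting $\|\tilde\eps\|_\infty\le(2d-1)\|\eps\|_\infty$, $\|x\|_\infty\ge\|p\|_2$, and $\|x\|_\infty^2\le d\|p\|_2^2$, produces the claimed bound. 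The main obstacle is neither the Dirichlet-kernel augmentation nor the application of the lemma but the bookkeeping required to convert the natural constants $C$ and $m$ coming from Lemma \ref{lem:full-induct-noise} into the $\tilde C$ and $\tilde m$ of the statement; here the monotonicity of the error bound in $C$ noted in Remark \ref{rem:incr} is what allows replacing $C$ by the larger $\tilde C$, and the various factors of $\sqrt d$ must be tracked through the two-sided relation between $\|x\|_\infty$ and $\|p\|_2$. Once these substitutions are made, the stated inequality follows by direct algebraic manipulation of the geometric sums.
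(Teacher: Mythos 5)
Your proposal follows the paper's proof essentially step for step: Dirichlet-kernel interpolation of the three measurement blocks with the uniform error bound $(2d-1)\|\eps\|_\infty$, reduction to full-vector recovery in the orthonormal basis $\{\frac1{\sqrt d}K_{z_0\nu^k}\}_{k=1}^d$ via Lemma \ref{lem:full-induct-noise} together with the two-sided comparison of $\|x\|_\infty$ and $\|p\|_2$, an appeal to Remark \ref{rem:incr} to replace $C$ by $\tilde C$, and summation of the geometric series. The only deviations are bookkeeping: the paper passes from coefficient errors to $\|\tilde p-c_0p\|_2$ by Minkowski's inequality rather than your (slightly sharper) Parseval identity, and it attaches the factor $\sqrt d$ to the interpolated sample values, which is how the stated $\tilde m$ and $\tilde C$ arise; your normalization of $x$ and $\tilde\eps$ produces a marginally different intermediate constant but the same structure and the same order of the final bound.
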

\begin{proof}
Let $D_{d-1}\in\mathcal T_d$ be the normalized Dirichlet kernel of degree $d-1$, so that for any $z$ in the unit circle $D_{d-1}(z)=\frac1{2d-1}\sum_{k=-(d-1)}^{d-1}z^k$. Then the set of functions $\{z\mapsto D_{d-1}(z\omega^{-l})\}_{l=1}^{2d-1}$ is orthonormal with respect to the $L^2$ inner product on the unit circle, and any $g\in\mathcal T_d$ can be interpolated as $g(z)=\sum_{l=1}^{2d-1}g(\omega^l)D_{d-1}(z\omega^{-l})$. If an error $\gamma\in\C^{2d-1}$ is present on each of the values $g(\omega^l)$, and if we let $\tilde g(z)=\sum_{l=1}^{2d-1}(g(\omega^l)+\gamma_l)D_{d-1}(z\omega^{-l})$, then for any $z$ on the unit circle
\begin{align*}
|\tilde g(z)-g(z)|
=&\left|\sum_{l=1}^{2d-1}(g(\omega^l)+\gamma_l)D_{d-1}(z\omega^{-l})-\sum_{l=1}^{2d-1}g(\omega^l)D_{d-1}(z\omega^{-l})\right|\\
=&\left|\sum_{l=1}^{2d-1}\gamma_lD_{d-1}(z\omega^{-l})\right|\\
\le&\sum_{l=1}^{2d-1}\left|\gamma_lD_{d-1}(z\omega^{-l})\right|\\
\le&(2d-1)\|\gamma\|_\infty
\end{align*}

If $h\in\mathcal P_d$, then $\overline hh\in\mathcal T_d$. Thus, each of the functions $|p(z)|^2$, $|p(z)-p(z\nu)|^2$, and $|p(z)-ip(z\nu)|^2$, are in $\mathcal T_d$, and using the Dirichlet Kernel these functions may be interpolated from the values of $\widetilde{\mathcal A}(p,\eps)$. The error present in the sample values means that approximating trigonometric polynomials are obtained from this interpolation, with a uniform error that is less than $(2d-1)\|\eps\|_\infty$
for any point on the unit circle. Let $f_0(z)=|p(z)|^2+e_0(z)$, $f_1(z)=|p(z)-p(z\nu)|^2+e_1(z)$, and $f_2(z)=|p(z)-ip(z\nu)|^2+e_2(z)$ be these approximating trigonometric polynomials.

We find a $z_0$ that satisfies the hypotheses of the theorem by a simple maximization argument on $\min\{f_0(\nu^jz)\}_{j=1}^d$. The set $\{z\mapsto\frac1{\sqrt d}K_{z_0\nu^j}(z)\}_{j=1}^d$ is an orthonormal basis for $\mathcal P_d$, and $p$ is full in this basis. Then because the values of $\sqrt df_0(z)$, $\sqrt df_1(z)$, and $\sqrt df_2(z)$ at the points $\{z_0\nu^k\}_{k=1}^d$, as well as an error $\gamma$ that depends on $e_0$, $e_1$, and $e_2$, with $\|\gamma\|_\infty\le\sqrt d(2d-1)\|\eps\|_\infty$, correspond to the measurements $\widetilde{\mathcal A}_{\{z\mapsto\frac1{\sqrt d}K_{z_0\nu^j}(z)\}}(p,\gamma)$ and we know these values on the entire unit circle, we may apply either of the full vector reconstructions given earlier to obtain an approximation $\tilde p\in\mathcal P_d$ for $p$. If Lemma \ref{lem:full-induct-noise} is applied to these measurements, and we use the equivalence of norms,
$$
\|x\|_\infty
=\max_k\{|p(z_0\nu^k)|\}
=\max_k\{|\langle p,K_{z_0\nu^k}\rangle|\}
\le\|p\|_2\|K_{z_0\nu^k}\|_2
\le\sqrt d\|p\|_2
$$
and
$$
\|x\|_\infty
=\max_k\{|p(z_0\nu^k)|\}
=\sqrt{\max_k\{|\langle p,K_{z_0\nu^k}\rangle|^2\}}
\ge\sqrt{\frac1d\sum_k|\langle p,K_{z_0\nu^k}\rangle|^2}
\ge\frac1{\sqrt d}\|p\|_2
$$
then with
$m=\frac{\sqrt d\tilde m\|p\|_2^2}{\|x\|_\infty^2}\ge\frac{\tilde m}{\sqrt d}$, and
$$
C=\frac{(1+\sqrt2)\|\gamma\|_\infty+\|x\|_\infty^2}{m\|x\|_\infty^2}
\le\frac{(1+\sqrt2)\sqrt d(2d-1)\|\eps\|_\infty+d\|p\|_2^2}{\sqrt d\tilde m\|p\|_2^2}
=\tilde C
$$
we obtain a vector of coefficients $y\in\C^d$, such that for all $k$ from $1$ to $d$,
$$
\left|y_k-\frac{\overline{p(\tilde z_0\nu)}}{|p(\tilde z_0\nu)|}p(\tilde z_0\nu^k)\right|
\le\left(\frac{2+\sqrt2}m\frac{1-C^{k-1}}{1-C}+\frac{C^{k-1}}{2\sqrt m}\right)\frac{\|\gamma\|_\infty}{\|x\|_\infty}
$$
and by Remark~\ref{rem:incr}
\begin{align*}
\left|y_k-\frac{\overline{p(\tilde z_0\nu)}}{|p(\tilde z_0\nu)|}p(\tilde z_0\nu^k)\right|
&\le\left(\frac{2+\sqrt2}m\frac{1-C^{k-1}}{1-C}+\frac{C^{k-1}}{2\sqrt m}\right)\frac{\|\gamma\|_\infty}{\|x\|_\infty}\\
&\le\left(\frac{2+\sqrt2}{\frac{\tilde m}{\sqrt d}}\frac{1-\tilde C^{k-1}}{1-\tilde C}+\frac{\tilde C^{k-1}}{2\sqrt{\frac{\tilde m}{\sqrt d}}}\right)d(2d-1)\frac{\|\eps\|_\infty}{\|p\|_2} \, .
\end{align*}
Let $\tilde p=\sum_{k=1}^dy_k\frac1{\sqrt d}K_{\tilde z_0\nu^k}$. Then Minkowski's inequality gives terms that form geometric series,
\begin{align*}
\left\|\tilde p-\frac{\overline{p(\tilde z_0\nu)}}{|p(\tilde z_0\nu)|}p\right\|_2
=&\left\|\sum_{k=1}^dy_k\frac1{\sqrt d}K_{\tilde z_0\nu^k}-\frac{\overline{p(\tilde z_0\nu)}}{|p(\tilde z_0\nu)|}\sum_{k=1}^dp(\tilde z_0\nu^k)\frac1{\sqrt d}K_{\tilde z_0\nu^k}\right\|_2\\
=&\left\|\sum_{k=1}^d\left(y_k-\frac{\overline{p(\tilde z_0\nu)}}{|p(\tilde z_0\nu)|}p(\tilde z_0\nu^k)\right)\frac1{\sqrt d}K_{\tilde z_0\nu^k}\right\|_2\\
\le&\sum_{k=1}^d\left|y_k-\frac{\overline{p(\tilde z_0\nu)}}{|p(\tilde z_0\nu)|}p(\tilde z_0\nu^k)\right|\frac1{\sqrt d}\|K_{\tilde z_0\nu^k}\|_2\\
\le&\sum_{k=1}^d\left(\frac{2+\sqrt2}{\frac{\tilde m}{\sqrt d}}\frac{1-\tilde C^{k-1}}{1-\tilde C}+\frac{\tilde C^{k-1}}{2\sqrt{\frac{\tilde m}{\sqrt d}}}\right)d(2d-1)\frac{\|\eps\|_\infty}{\|p\|_2}\\
=&\left(\frac{2+\sqrt2}{\tilde m}\frac{d-\sum_{k=1}^d\tilde C^{k-1}}{1-\tilde C}\sqrt d+\frac{\sum_{k=1}^d\tilde C^{k-1}}{2\sqrt{\frac{\tilde m}{\sqrt d}}}\right)d(2d-1)\frac{\|\eps\|_\infty}{\|p\|_2}\\
=&\left(\frac{2+\sqrt2}{\tilde m}\frac{d-d\tilde C-1+\tilde C^d}{1-\tilde C}\sqrt d+\frac{1-\tilde C^d}{2\sqrt{\frac{\tilde m}{\sqrt d}}}\right)\frac{d(2d-1)}{(1-\tilde C)}\frac{\|\eps\|_\infty}{\|p\|_2}\, .
\end{align*} 
\end{proof}

To obtain a uniform error bound that only assumes bounds on the norms of the vector $p$ and on the magnitude of the noise $\|\epsilon\|_\infty$, we use the max-min principle from Section~\ref{sec:max-min}. This provides us with a universally valid lower bound $\tilde m$ that applies to the above theorem.

\begin{thm}
\label{thm:main-uni}
Let $r=\sin(\frac{2\pi}{(d-1)d^2})$, and $0<\alpha<1$. For 
any polynomial $p\in\mathcal P_d$ with $\|p\|_2=1$, and any $\eps\in\R^{6d-3}$, if $\beta=\frac{r^\frac{(d-1)d}2\left(\frac{d-1}{2d}\right)^d\frac2{d-1}}{\left(\prod_{k=1}^{d-1}(r^k+1)\right)}$ and $\|\eps\|_\infty\le\frac{\alpha\beta^2}{2d-1}$, then an approximation $\tilde p\in \mathcal P_d$ can be reconstructed using the Dirichlet Kernel and the values of $\widetilde{\mathcal A}(p,\eps)$, such that if $\tilde C=\frac{(1+\sqrt2)(2d-1)\|\eps\|_\infty+d}{\beta^2(1-\alpha)}$ then for some $c_0$ on the unit circle
$$
\|\tilde p-c_0p\|_2\le
\left(\frac{2+\sqrt2}{\beta^2(1-\alpha)}\frac{d-d\tilde C-1+\tilde C^d}{1-\tilde C}\sqrt d+\frac{1-\tilde C^d}{2\beta\sqrt{\frac1{\sqrt d}(1-\alpha)}}\right)\frac{d(2d-1)}{(1-\tilde C)}\|\eps\|_\infty \, .
$$
\end{thm}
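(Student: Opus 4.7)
The strategy is to apply the preceding theorem with a universally valid lower bound $\tilde m$ derived from the max--min principle established in Lemmas \ref{lem:dist-bnd} and \ref{lem:mag-bnd}.

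First, I would invoke Lemma \ref{lem:dist-bnd} to choose $z_0$ on the unit circle such that the linear distance between each element of $\{\nu^j z_0\}_{j=1}^d$ and every zero of every nonzero truncation of $p$ is at least $r=\sin(\frac{2\pi}{(d-1)d^2})$. Applying Lemma \ref{lem:mag-bnd} with this $z_0$ then yields a lower bound of the form $|p(\nu^j z_0)|\ge\beta\|\hat p\|_1$ for every $j\in\{1,\dots,d\}$, where $\beta$ agrees with the constant in the theorem statement (up to the inconsequential $k=0$ factor in the product). Since $\|\hat p\|_1\ge\|\hat p\|_2=\|p\|_2=1$, this gives $|p(\nu^j z_0)|^2\ge\beta^2$. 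The hypothesis $\|\eps\|_\infty\le\frac{\alpha\beta^2}{2d-1}$ then produces $(2d-1)\|\eps\|_\infty\le\alpha\beta^2$, so that
$$\min_{1\le j\le d}|p(\nu^j z_0)|^2-(2d-1)\|\eps\|_\infty\ge(1-\alpha)\beta^2.$$

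This inequality is precisely the hypothesis of the preceding theorem with $\tilde m=(1-\alpha)\beta^2$ (recall $\|p\|_2^2=1$). Substituting $\tilde m=(1-\alpha)\beta^2$ directly reproduces the constants $\frac{2+\sqrt2}{\beta^2(1-\alpha)}$ and $\frac{1}{2\beta\sqrt{(1-\alpha)/\sqrt d}}$ that appear in the two terms of the target error bound. For the auxiliary parameter, the $\tilde C$ coming from the preceding theorem equals
$$\frac{(1+\sqrt2)\sqrt d\,(2d-1)\|\eps\|_\infty+d}{\sqrt d\,\beta^2(1-\alpha)}=\frac{(1+\sqrt2)(2d-1)\|\eps\|_\infty+\sqrt d}{\beta^2(1-\alpha)},$$
which is bounded above by the $\tilde C$ stated in the present theorem because $\sqrt d\le d$. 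By Remark \ref{rem:incr}, the error estimate is monotonically increasing in $\tilde C$, so replacing the smaller $\tilde C$ by the larger one only weakens the inequality and gives the stated bound.

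The main obstacle, beyond routine substitution, is the bookkeeping needed to confirm that the universal lower bound $(1-\alpha)\beta^2$ derived from the two lemmas exactly interfaces with the parameter $\tilde m$ of the preceding theorem, and that the monotonicity of the error expression in $\tilde C$ permits the slight weakening from $\sqrt d$ to $d$ in the numerator. The algorithm itself carries over unchanged from the preceding theorem: interpolate the noisy magnitudes by Dirichlet kernels, select $z_0$ by maximizing the approximate analogue of $\min_j|p(\nu^j z_0)|^2$, and apply either phase propagation or the kernel method to the induced full-vector problem in the orthonormal basis $\{\tfrac1{\sqrt d}K_{z_0\nu^j}\}_{j=1}^d$.
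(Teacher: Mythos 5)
Your proposal is correct and follows essentially the same route as the paper's proof: choose $z_0$ via Lemma~\ref{lem:dist-bnd}, apply Lemma~\ref{lem:mag-bnd} together with $\|\hat p\|_1\ge\|p\|_2=1$ to get the uniform sample bound $\beta^2(1-\alpha)$, and feed $\tilde m=\beta^2(1-\alpha)$ into the preceding theorem. Your explicit appeal to Remark~\ref{rem:incr} to justify replacing the induced value of $\tilde C$ (which carries $\sqrt d$ in the numerator) by the larger $\tilde C$ of the statement (which carries $d$) is a step the paper's proof passes over silently, and it is the correct justification.
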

\begin{proof}
By Lemma \ref{lem:dist-bnd} we know that there exists a $z_0$ on the unit circle such that the distance between any element of $\{\nu^jz_0\}_{j=1}^d$ and any roots of any nonzero truncations of $p$ is at least $r$. Then by Lemma \ref{lem:mag-bnd} we know that for all $j$ from $1$ to $d$, $|p(\nu^jz_0)|\ge\beta\| \hat p\|_1\ge\beta\|p\|_2=\beta$. Thus, there exists a $z_0$ on the unit circle such that
$$\min\left\{|p(\nu^jz_0)|^2-(2d-1)\|\eps\|_\infty\right\}_{j=1}^d\ge\beta^2-(2d-1)\|\eps\|_\infty\ge\beta^2(1-\alpha)$$
for all $j$ from $1$ to $d$ and we may use $z_0$ and $\tilde m=\beta^2(1-\alpha)$ in the preceding theorem.  When we apply the  above theorem, we get
\begin{align*}
\|\tilde p-c_0p\|_2
&\le\left(\frac{2+\sqrt2}{\tilde m}\frac{d-d\tilde C-1+\tilde C^d}{1-\tilde C}\sqrt d+\frac{1-\tilde C^d}{2\sqrt{\frac{\tilde m}{\sqrt d}}}\right)\frac{d(2d-1)}{(1-\tilde C)}\|\eps\|_\infty\\
&\le\left(\frac{2+\sqrt2}{\beta^2(1-\alpha)}\frac{d-d\tilde C-1+\tilde C^d}{1-\tilde C}\sqrt d+\frac{1-\tilde C^d}{2\beta\sqrt{\frac1{\sqrt d}(1-\alpha)}}\right)\frac{d(2d-1)}{(1-\tilde C)}\|\eps\|_\infty \, .
\end{align*}
\end{proof}

We remark that any $z_0$ that satisfies the claimed max-min bound does not necessarily satisfy Lemma~\ref{lem:dist-bnd}.
This means that the above theorem would benefit immediately from an improved lower bound on the minimum magnitude.

As the final step for the main result, we remove the normalization condition on the input vector.
Since the norm of the vector enters quadratically in each component of $\mathcal A(x,\epsilon)$,
the dependence of the error bound on $\|p\|$ is not linear. Instead, we obtain a bound on the accuracy of the reconstruction
which is inverse proportional to the signal-to-noise ratio $\|p\|_2/\|\epsilon\|_\infty$, assuming
that $\|\epsilon\|_\infty$ is sufficiently small compared to $\|p\|^2$.

\begin{thm}
Let $r=\sin(\frac{2\pi}{(d-1)d^2})$, and $0<\alpha<1$. For any nonzero polynomial $p\in\mathcal P_d$, and any $\eps\in\R^{6d-3}$, if $\beta=\frac{r^\frac{(d-1)d}2\left(\frac{d-1}{2d}\right)^d\frac2{d-1}}{\left(\prod_{k=1}^{d-1}(r^k+1)\right)}$ and $\|\eps\|_\infty\le\frac{\alpha\beta^2\|p\|_2^2}{2d-1}$, then an approximation $\tilde p\in \mathcal P_d$ can be reconstructed using the Dirichlet Kernel and the values of $\widetilde{\mathcal A}(p,\eps)$, such that if $\tilde C=\frac{(1+\sqrt2)(2d-1)\|\eps\|_\infty+\sqrt d}{\beta^2(1-\alpha)}$ then for some $c_0$ on the unit circle
$$
\|\tilde p-c_0p\|_2\le
\left(\frac{2+\sqrt2}{\beta^2(1-\alpha)}\frac{d-d\tilde C-1+\tilde C^d}{(1-\tilde C)^2}+\frac{1-\tilde C^d}{2\beta\sqrt{\sqrt d(1-\alpha)}(1-\tilde C)}\right)\sqrt d(2d-1)\frac{\|\eps\|_\infty}{\|p\|_2} \, .
$$
\end{thm}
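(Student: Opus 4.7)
The plan is to apply the unnormalized stability theorem that immediately precedes Theorem~\ref{thm:main-uni} (the one whose hypothesis is $\min_j\{|p(\nu^j z_0)|^2 - (2d-1)\|\eps\|_\infty\} \ge \tilde m \|p\|_2^2$), after using Lemmas~\ref{lem:dist-bnd} and~\ref{lem:mag-bnd} to certify the max-min hypothesis for an arbitrary nonzero $p$. The key observation is that the proof of Theorem~\ref{thm:main-uni} used $\|p\|_2=1$ only at the moment of extracting $|p(\nu^j z_0)|\ge \beta$ from Lemma~\ref{lem:mag-bnd}; because the noise tolerance in the present hypothesis has been rescaled quadratically to $\|\eps\|_\infty \le \alpha\beta^2\|p\|_2^2/(2d-1)$, exactly the same chain of inequalities goes through in the unnormalized setting.

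First, I would invoke Lemma~\ref{lem:dist-bnd} to produce a point $z_0$ on the unit circle such that the orbit $\{\nu^j z_0\}_{j=1}^d$ is at linear distance at least $r=\sin(2\pi/((d-1)d^2))$ from every root of every nonzero truncation of $p$. Lemma~\ref{lem:mag-bnd} then converts this into the pointwise lower bound $|p(\nu^j z_0)|\ge \beta\|\hat p\|_1 \ge \beta\|p\|_2$ for each $j\in\{1,\dots,d\}$. Using the hypothesis on $\|\eps\|_\infty$, this yields
\[
\min_{1\le j\le d}\bigl(|p(\nu^j z_0)|^2 - (2d-1)\|\eps\|_\infty\bigr) \ge \beta^2\|p\|_2^2 - \alpha\beta^2\|p\|_2^2 = \beta^2(1-\alpha)\|p\|_2^2,
\]
which is precisely the max-min hypothesis of the unnormalized theorem with $\tilde m = \beta^2(1-\alpha)$.

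Invoking that theorem gives an approximation $\tilde p$ constructed from Dirichlet-kernel interpolation of $\widetilde{\mathcal A}(p,\eps)$, followed by either phase propagation or the kernel method on the orthonormal basis $\{\tfrac1{\sqrt d}K_{z_0\nu^j}\}_{j=1}^d$. Its error estimate already carries the factor $\|\eps\|_\infty/\|p\|_2$ in the stated form. To arrive at the precise expression in the statement, I would then substitute $\tilde m = \beta^2(1-\alpha)$ into the formula $\tilde C = ((1+\sqrt 2)\sqrt d(2d-1)\|\eps\|_\infty + d\|p\|_2^2)/(\sqrt d\,\tilde m\,\|p\|_2^2)$ from the unnormalized theorem and redistribute the prefactor $d/(1-\tilde C)$ into the two summands inside the parentheses, using $\sqrt{\tilde m/\sqrt d} = \beta\sqrt{(1-\alpha)/\sqrt d}$ to recognize the second term.

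I expect the main obstacle to be purely algebraic bookkeeping: verifying that the powers of $d$ and of $\|p\|_2$ partition correctly between the geometric-series sum inherited from Lemma~\ref{lem:full-induct-noise}, the $\sqrt{\tilde m/\sqrt d}$ appearing under a square root in the base-case term, and the outer $d(2d-1)/(1-\tilde C)$ prefactor. There is no new conceptual content beyond the concatenation of Lemmas~\ref{lem:dist-bnd} and~\ref{lem:mag-bnd} with the unnormalized stability theorem; the removal of the normalization condition is painless because that theorem was already phrased so as to absorb $\|p\|_2$ symmetrically into hypothesis and conclusion.
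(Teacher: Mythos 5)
Your proposal is correct and rests on exactly the same ingredients as the paper's argument: Lemma~\ref{lem:dist-bnd}, Lemma~\ref{lem:mag-bnd}, and the preceding $\tilde m$-stability theorem. The one organizational difference is that the paper does not re-verify the max-min hypothesis for the unnormalized $p$; it instead exploits the quadratic homogeneity $\widetilde{\mathcal A}(p/\|p\|_2,\eps/\|p\|_2^2)=\frac{1}{\|p\|_2^2}\widetilde{\mathcal A}(p,\eps)$ to feed the normalized polynomial and rescaled noise into the already-proven Theorem~\ref{thm:main-uni}, and then multiplies the resulting inequality by $\|p\|_2$. Your route bypasses Theorem~\ref{thm:main-uni} entirely and goes straight from the two lemmas (which, as you note, give $|p(\nu^j z_0)|\ge\beta\|p\|_2$ without any normalization) to the $\tilde m$-theorem with $\tilde m=\beta^2(1-\alpha)$; both routes are valid and of essentially equal length. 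One caveat on your final bookkeeping step: substituting $\tilde m=\beta^2(1-\alpha)$ into $\tilde C=((1+\sqrt2)\sqrt d(2d-1)\|\eps\|_\infty+d\|p\|_2^2)/(\sqrt d\,\tilde m\,\|p\|_2^2)$ yields $\tilde C=((1+\sqrt2)(2d-1)\|\eps\|_\infty/\|p\|_2^2+\sqrt d)/(\beta^2(1-\alpha))$, which carries a $\|p\|_2^{-2}$ on the noise term that is absent from the theorem statement, and the redistribution of the prefactor $d(2d-1)/(1-\tilde C)$ does not reproduce the stated powers of $d$ either; these discrepancies are present in the paper itself (its own proof displays the bound in the form inherited from Theorem~\ref{thm:main-uni}, not the form in the theorem statement), so they reflect typographical inconsistencies in the source rather than a gap in your argument. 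You should simply not expect the ``precise expression in the statement'' to emerge from the algebra; what emerges is the expression in the paper's proof.
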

\begin{proof}
Note that $\widetilde{\mathcal A}(\frac p{\|p\|_2},\frac\eps{\|p\|_2^2})=\frac1{\|p\|_2^2}\widetilde{\mathcal A}(p,\eps)$. By Theorem \ref{thm:main-uni} we know that the values of $\widetilde{\mathcal A}(\frac p{\|p\|_2},\frac\eps{\|p\|_2^2})$ can be used to obtain
$$
\left\|\frac{\tilde p}{\|p\|_2}-c_0\frac p{\|p\|_2}\right\|_2\le
\left(\frac{2+\sqrt2}{\beta^2(1-\alpha)}\frac{d-d\tilde C-1+\tilde C^d}{1-\tilde C}\sqrt d+\frac{1-\tilde C^d}{2\beta\sqrt{\frac1{\sqrt d}(1-\alpha)}}\right)\frac{d(2d-1)}{(1-\tilde C)}\left\|\frac\eps{\|p\|_2^2}\right\|_\infty
$$
Scaling both sides of the inequality by $\|p\|$ then gives
$$
\left\|\tilde p-c_0p\right\|_2\le
\left(\frac{2+\sqrt2}{\beta^2(1-\alpha)}\frac{d-d\tilde C-1+\tilde C^d}{1-\tilde C}\sqrt d+\frac{1-\tilde C^d}{2\beta\sqrt{\frac1{\sqrt d}(1-\alpha)}}\right)\frac{d(2d-1)}{(1-\tilde C)}\frac{\|\eps\|_\infty}{\|p\|_2} \, .
$$
\end{proof}

\paragraph{Acknowledgments.} This work was supported in part by the National Science Foundation grant NSF DMS-1412524 and
by the Alexander von Humboldt foundation.

\begin{bibdiv}
\begin{biblist}

\bib{Akutowicz1956}{article}{
author = {Akutowicz, Edwin J.},
title = {On the determination of the phase of a Fourier integral, I.},
 journal ={Trans. Amer. Math. Soc.},
 volume = {83}, 
 pages = {179\ndash 192},
 date = {1956}
 }
 
 \bib{Akutowicz1957}{article}{
author = {Akutowicz, Edwin J.},
title = {On the determination of the phase of a Fourier integral, II.},
 journal ={Proc. Amer. Math. Soc.},
 volume = {8}, 
 pages = {234\ndash 238},
 date = {1957}
 }

\bib{Alexeev_PhaseRetrieval13}{article}{
      author={Alexeev, Boris},
      author={Bandeira, Afonso~S.},
      author={Fickus, Matthew},
      author={Mixon, Dustin~G.},
       title={{Phase Retrieval with Polarization}},
        date={2014},
     journal={SIAM~J.~Imaging~Sci.},
      volume={7},
      number={1},
       pages={35\ndash 66},
}

\bib{Balan_Painless_09}{article}{
      author={Balan, Radu},
      author={Bodmann, Bernhard~G.},
      author={Casazza, Peter~G.},
      author={Edidin, Dan},
       title={{Painless reconstruction from magnitudes of frame coefficients}},
        date={2009-08},
     journal={{J.~Fourier Anal.~Appl.}},
      volume={15},
      number={4},
       pages={488\ndash 501},
}

\bib{BCE07}{article}{
author = {Radu Balan}, 
author = {Peter G. Casazza},
author = {Dan Edidin},
title = {Equivalence of Reconstruction from the Absolute Value of the
Frame Coefficients to a Sparse Representation Problem},
journal = {
IEEE Signal.Proc.Letters},
 volume = {14},
 number = {5}, 
 date = {2007}, 
 pages = {341\ndash 343}
 }

\bib{Balan_RecWithoutPhase_06}{article}{
      author={Balan, Radu},
      author={Casazza, Peter G.},
      author={Edidin, Dan},
       title={{On signal reconstruction without phase}},
        date={2006-05},
     journal={{Appl. Comput. Harmon. Anal.}},
      volume={20},
      number={3},
       pages={345\ndash 356},
}

\bib{BalanYang}{article}{
  author = {Balan, Radu},
  author ={Wang, Yang},
  title = {Invertibility and Robustness of Phaseless Reconstruction},
  date = {2013-08},
  eprint = {arXiv:1308.4718},
  }

\bib{Bandeira_4NConj}{article}{
      author={Bandeira, Afonso~S.},
      author={Cahill, Jameson},
      author={Mixon, Dustin~G.},
      author={Nelson, Aaron~A.},
       title={{Saving phase: Injectivity and stability for phase retrieval}},
        date={2014-07},
     journal={{Appl. Comput. Harmon. Anal.}},
      volume={37},
      number={1},
       pages={106\ndash 125},
}

\bib{BandChen_II14}{article}{
      author={Bandeira, Afonso~S.},
      author={Chen, Yutong},
      author={Mixon, Dustin~G.},
       title={{Phase retrieval from power spectra of masked signals}},
        date={2014-06},
     journal={{Information and Interference}},
      volume={3},
      number={2},
       pages={83\ndash 102},
}

\bib{BodmannHammen}{article}{
author={Bodmann, Bernhard G.},
author = {Hammen, Nathaniel},
title={Stable phase retrieval with low-redundancy frames},
journal={Advances in Computational Mathematics},
volume={40},
date = {2014-5},
pages={1-15},
}

\bib{CandesEldar_PhaseRetrieval}{article}{
      author={Cand{\`e}s, Emmanuel~J.},
      author={Eldar, Yonina~C.},
      author={Strohmer, Thomas},
      author={Voroninski, Vladislav},
       title={{Phase retrieval via matrix completion}},
        date={2013},
     journal={{SIAM J.~Imaging Sci.}},
      volume={6},
      number={1},
       pages={199\ndash 225},
}

\bib{CandesLi_FCM13}{article}{
      author={Cand{\`e}s, Emmanuel~J.},
      author={Li, Xiaodong},
       title={{Solving quadratic equations via PhaseLift when there are about
  as many equations as unknowns}},
        date={2014},
     journal={{Found. Comput. Math.}},
      volume={14},
        note={to appear},
}

\bib{Candes_CDP13}{article}{
      author={Cand{\`e}s, Emmanuel~J.},
      author={Li, Xiaodong},
      author={Soltanolkotabi, Mahdi},
       title={{Phase retrieval from coded diffraction patterns}},
        date={2013-11},
      eprint={arXiv:1310.3240},
        note={pre-print},
}

\bib{Candes_PhaseLift}{article}{
      author={Cand{\`e}s, Emmanuel~J.},
      author={Strohmer, Thomas},
      author={Voroninski, Vladislav},
       title={{PhaseLift: Exact and stable signal recovery from magnitude
  measurements via convex programming}},
        date={2013-08},
     journal={{Comm. Pure Appl. Math.}},
      volume={66},
      number={8},
       pages={1241\ndash 1274},
}

\bib{Conca_Algebraic13}{article}{
      author={Conca, Aldo},
      author={Edidin, Dan},
      author={Hering, Milena},
      author={Vinzant, Cynthia},
       title={{An algebraic characterization of injectivity in phase
  retrieval}},
        date={2013-11},
      eprint={arXiv:1312.0158},
        note={preprint},
}

\bib{Demanet_PhaselessLinMeas13}{article}{
      author={Demanet, Laurent},
      author={Hand, Paul},
       title={{Stable optimizationless recovery from phaseless linear
  measurements}},
        date={2014-02},
     journal={J. Fourier Anal. Appl.},
      volume={20},
      number={1},
       pages={199\ndash 221},
}

\bib{Eldar}{article}{
author = {Yonina C. Eldar}, 
author = {Shahar Mendelson},
title = {Phase retrieval: Stability and recovery guarantees},
journal = {Appl. Comput. Harmon. Anal.},
volume = {36}, 
pages = {473\ndash 494}, 
date = {2014},
}

\bib{Fickus_VeryFewMeasurements}{article}{
      author={Fickus, Matthew},
      author={Mixon, Dustin~G.},
      author={Nelson, Aaron~A.},
      author={Wang, Yang},
       title={{Phase retrieval from very few measurements}},
        date={2014-05},
     journal={Linear Algebra Appl.},
      volume={449},
       pages={475\ndash 499},
}

\bib{Fienup}{article}{
author = {James R. Fienup},
title = {Phase retrieval algorithms: a comparison},
journal ={Applied optics},
volume = {21(15)},
pages = {2758\ndash 2769}, 
date = {1982},
}

\bib{Fienup_93}{article}{
      author={Fienup, James R.},
      author={Marron, Joseph C.},
      author={Schulz, Timothy J.},
      author={Seldin, John H.},
       title={Hubble space telescope characterized by using phase-retrieval
  algorithms},
        date={1993-04},
     journal={Appl. Opt.},
      volume={32},
      number={10},
       pages={1747\ndash 1767},
}

\bib{Finkelstein_QuantumCom04}{article}{
      author={Finkelstein, Jerome},
       title={{Pure-state informationally complete and "really" complete
  measurements}},
        date={2004},
     journal={Phys. Rev.~A},
      volume={70},
       pages={052107},
}

\bib{Flammia_PureStates05}{article}{
      author={Flammia, Steven~T.},
      author={Silberfarb, Andrew},
      author={Caves, Carlton~M.},
       title={{Minimal informationally complete measurements for pure states}},
        date={2005-12},
     journal={Found. Phys.},
      volume={35},
      number={12},
       pages={1985\ndash 2006},
}

\bib{Gross_2014}{article}{
      author={Gross, David},
      author={Krahmer, Felix},
      author={Kueng, Richard},
       title={{A partial derandomization of PhaseLift using spherical
  designs}},
        date={2014},
     journal={J.~Fourier Anal.~Appl.},
      eprint={arXiv:1310.2267},
        note={to appear},
}

\bib{GuEisenstat}{article}{
  author={Gu, Ming},
  author={Eisenstat, Stanley C.},
  title={Efficient algorithms for computing a strong rank-revealing {Q}{R} factorization},
  date={1996},
  journal={SIAM J. Sci. Comput.},
  volume={17},
  number={4},
  pages={848\ndash 869},
  }

\bib{Oppenheim_Phase_80}{article}{
      author={Hayes, Monson~H.},
      author={Lim, Jae~S.},
      author={Oppenheim, Alan~V.},
       title={{Signal reconstruction from phase or magnitude}},
        date={1980-12},
     journal={{IEEE} Trans. Acoust., Speech, Signal Process.},
      volume={28},
      number={6},
       pages={672\ndash 680},
}

\bib{Patterson}{article}{
author = {Arthur L.  Patterson},
title = {A direct method for the determination of the components of interatomic distances in crystals},
journal = {Zeitschrift f{\"ur} Kristallographie},
volume = {90},
pages = {517\ndash 542},
date = {1935},
}

\bib{Heinosaari_QuantumTom_13}{article}{
      author={Heinosaarri, Teiko},
      author={Mazzarella, Luca},
      author={Wolf, Michael~M.},
       title={{Quantum tomography under prior information}},
        date={2013},
     journal={Commun. Math. Phys},
      volume={318},
       pages={355\ndash 374},
}

\bib{Jaming_Radar10}{inproceedings}{
      author={Jaming, Philippe},
       title={{The phase retrieval problem for the radar ambiguity function and
  vice versa}},
        date={2010-05},
   booktitle={{IEEE Intern. Radar Conf.}},
     address={Washington, DC, USA},
}

\bib{Jaming_Fractional10}{article}{
      author={Jaming, Philippe},
       title={Uniqueness results for the phase retrieval problem of fractional
  Fourier transforms of variable order},
        date={2010},
     journal={arXiv preprint arXiv:1009.3418},
}

\bib{Marchesini_AltProj14}{article}{
      author={Marchesini, Stefano},
      author={Tu, Yu-Chao},
      author={Wu, Hau-Tieng},
       title={{Alternating projection, ptychographic imaging and phase
  synchronization}},
        date={2014-02},
      eprint={arXiv:1402.0550},
        note={pre-print},
}

\bib{MV13} {article}{
 author = {Mondragon, Damien}, 
 author = {Voroninski, Vladislav},
 title = {Determination of all pure quantum states from a minimal number
of observables}, 
 eprint = {arXiv:1306.1214} ,
  date = {2013-06}
}

\bib{Millane_90}{article}{
      author={Millane, Rick P.},
       title={{Phase retrieval in crystallography and optics}},
        date={1990-03},
     journal={J.~Opt.~Soc.~Amer. A},
      volume={7},
      number={3},
       pages={394\ndash 411},
}

\bib{PYB_JFAA14}{article}{
      author={Pohl, Volker},
      author={Yang, Fanny},
      author={Boche, Holger},
       title={{Phaseless signal recovery in infinite dimensional spaces using
  structured modulations}},
        date={2014-07},
     journal={{J.~Fourier Anal.~Appl.}},
      eprint={arXiv:1305.2789},
        note={to appear},
}

\bib{Pohl_ICASSP14}{inproceedings}{
      author={Pohl, Volker},
      author={Yapar, Cagkan},
      author={Boche, Holger},
      author={Yang, Fanny},
       title={{A phase retrieval method for signals in modulation-invariant
  spaces}},
        date={2014-05},
   booktitle={{Proc. 39th Intern. Conf. on Acoustics, Speech, and Signal
  Processing (ICASSP)}},
     address={Florence, Italy},
}

\bib{Rabiner}{book}{
author = {Lawrence Rabiner and Biing Hwang H. Juang},
title = {Fundamentals of speech recognition},
publisher ={Prentice Hall},
date = {1993}, 
}

\bib{Waldspurger_PR14}{article}{
      author={Waldspurger, Ir{\`e}ne},
      author={d'Aspremont, Alexandre},
      author={Mallat, St{\'e}phane},
       title={{Phase Recovery, MaxCut and Complex Semidefinite Programming}},
        date={2014},
     journal={Math. Program., Ser.~A},
      volume={144},
        note={to appear},
}

\bib{Walther}{article}{
author = {Adriaan Walther},
title = {The question of phase retrieval in optics},
journal = {Journal of Modern Optics}, 
volume = {10},
number = {1},
pages = {41\ndash 49}, 
year = {1963},
}

\bib{Yang_SampTA13}{inproceedings}{
      author={Yang, Fanny},
      author={Pohl, Volker},
      author={Boche, Holger},
       title={{Phase retrieval via structured modulations in Paley-Wiener
  spaces}},
        date={2013-07},
   booktitle={{Proc. 10th Intern. Conf. on Sampling Theory and Applications
  (SampTA)}},
     address={Bremen, Germany},
}


\end{biblist}
\end{bibdiv}

\end{document}